\numberwithin{equation}{section}
\newtheorem{thm}{Theorem}[section]
\newtheorem{prop}[thm]{Proposition}
\newtheorem{lm}[thm]{Lemma}
\newtheorem{cor}[thm]{Corollary}
\newtheorem{Rq}[thm]{Remark}
\theoremstyle{definition}
\newtheorem{ex}[thm]{Example}
\theoremstyle{remark}
\theoremstyle{plain}
\DeclareMathAlphabet{\calptmx}{OMS}{ztmcm}{m}{n}
\newcommand{\N}{\mathbb{N}}
\newcommand{\C}{\mathbb{C}}
\newcommand{\R}{\mathbb{R}}
\newcommand{\T}{\mathbb{T}}
\newcommand{\Nc}{\mathcal{N}}
\newcommand{\Cc}{\mathcal{C}}
\newcommand{\Sb}{\mathbb{S}}
\newcommand{\non}{\noindent}
\newcommand{\Hc}{\mathcal{H}}
\author{Bakri Laurent}
\title{ Critical set of eigenfunctions of the Laplacian}
\begin{document}

\date{}
\maketitle
\begin{center}
{E-mail:  Laurent.bakri@gmail.com}
\end{center}
\medskip
 \begin{abstract}
 We give an upper bound for the $(n-1)$-dimensional
 Hausdorff measure of the critical set of eigenfunctions of the Laplacian on compact analytic Riemannian manifolds.
This is the analog  of H. Donnely and C. Fefferman \cite{DF1} result on nodal set of eigenfunctions.
 \end{abstract}

\section{Introduction and statement of the results}
Let $(M,g)$ be a smooth,  compact and connected, $n$-dimensional Riemannian manifold ($n\geq2)$.
 For $u \in\Cc^1(M)$, we set $$\Nc_u=\{x\in M:u(x)=0\}$$  and $$\Cc_u=\{x\in M:\nabla u(x)=0\},$$ the nodal set of $u$ and the critical set respectively.
It is well kown that if $u$ is a non trivial solution of second order linear elliptic equation then all zeros of $u$ are of finite order (\cite{aron},\cite{horm2}),
 and one can prove that the Hausdorff dimension of the nodal set $\Nc_u$ is at most $n-1$ (for example, see \cite{CF} or  \cite{HS} for more precise results).
 When dealing with the eigenfunctions of the Laplacian  : \begin{equation}\label{fp}- \Delta u=\lambda u,
\end{equation} 
S. T. Yau \cite{yau} has conjectured that  $$C_1\sqrt{\lambda}\leq \Hc^{n-1}(\Nc_{ u})\leq C_2\sqrt{\lambda}$$ where
  $\Hc^{n-1}$ denotes the $(n-1)$-dimensional Hausdorff measure and $C_1$, $C_2$ are positives constants depending only upon $M$.
 In  case that both the manifold and the metric are real analytic, the problem was solved by H. Donnelly and C. Fefferman \cite{DF1}, \cite{DF2}.
 For smooth metric the only known  upper bound result ($n\geq3$) is due to R. Hardt and L. Simon \cite{HS}.
 They proved that $$\Hc^{n-1}(\Nc_{u})\leq( c\sqrt{\lambda})^{c\sqrt{\lambda}}. $$ However this result doesn't seems to be optimal. 
Recently, different authors (\cite{Man} \cite{CM}, \cite{SZ}) obtained some lower bound  with polynomial decrease in $\lambda$.

The critical set of eigenfunctions on the other hand is not so well understood (one could look at \cite{Z} for a quick survey). 
Generically eigenfunctions are Morse functions (\cite{U}) and therefore the critical set consits in isolated points. 
Moreover, D. Jakobson and N. Nadirashvili \cite{JN} have shown that there exists in dimension two a sequence of eigenfunctions for which the number of critical points
 is uniformly bounded. However there exists simple examples for which the critical set has Hausdorff dimension $n-1$ : 
\begin{ex}\label{opt}
 Let $(N,g)$ be a $(n-1)-$dimensional manifold and define $M=\T^1\times N$  where $\T^1$ is the $1-$dimensionnal Torus with standard metric, and $M$ is equipped with the product metric.
The function $f_k(x,y)=\sin(2\pi kx)$ is an eigenfunction of $\Delta_M$ with eigenvalue $\lambda:=k^2$. The critical set, $\Cc_{f_k}$, of $f_k$ is therefore a set of dimension $n-1$. 
One should also note that $\Hc^{n-1}(\Cc_{f_k})\geq C\sqrt{\lambda}$, where $C$ depends only on $M$.
\end{ex}
It is also easy to find some surface of revolution with critical set of dimension $(n-1)$, see \cite{Z} p 35.
In the case of a critical set of dimension $n-1$ it seems interresting to obtained some upper bound on the $(n-1)-$dimensionnal Hausdorff measure. This is the goal of this paper.
We will show that :
 \begin{thm}\label{principal} Let $M$ be a $n$-dimensionnal, real analytic, compact, connected manifold with analytic metric. 
There exist $C>0$ depending only on $M$ such that for any non-constant solution $u$ to \eqref{fp} one has 
$$\Hc^{n-1}(\Cc_u)\leq C\sqrt{\lambda},$$ where $\Cc_u$ is the critical set of $u$.
\end{thm}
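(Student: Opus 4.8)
The plan is to reduce the critical set bound to the known nodal set estimate of Donnelly--Fefferman by observing that each partial derivative of $u$ in a fixed coordinate frame is again a solution of a second order elliptic equation with controlled coefficients, and then to cover $\Cc_u$ by a bounded number of nodal sets of such derivatives. First I would fix a finite analytic atlas $\{(U_\alpha,\varphi_\alpha)\}$ of $M$ together with a subordinate partition of unity, so that it suffices to bound $\Hc^{n-1}(\Cc_u\cap K_\alpha)$ for each compact $K_\alpha\subset U_\alpha$. In the chart $U_\alpha$, write $v_i=\partial u/\partial x_i$ for $i=1,\dots,n$. Differentiating $-\Delta_g u=\lambda u$ (where $\Delta_g=\frac1{\sqrt{|g|}}\partial_j(\sqrt{|g|}\,g^{jk}\partial_k)$) with respect to $x_i$ shows that the vector $V=(v_1,\dots,v_n)$ satisfies a second order elliptic \emph{system} $-\Delta_g v_i+\sum_{j}b_{ij}^k\partial_k v_j+\sum_j c_{ij}v_j=\lambda v_i$, whose coefficients $b,c$ are analytic and, crucially, bounded in terms of the metric alone (independent of $\lambda$ and $u$).

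The heart of the matter is a quantitative unique continuation / doubling estimate for this system. I would establish that the doubling exponent of $V$ on $M$ is $O(\sqrt\lambda)$: that is, there is $C=C(M)$ so that for all small $r$ and all $x$,
$$\int_{B(x,2r)}|V|^2\,dvol_g\le e^{C\sqrt\lambda}\int_{B(x,r)}|V|^2\,dvol_g.$$
This is the analog, for the gradient system, of the Donnelly--Fefferman/Garofalo--Lin frequency bound for $u$ itself; one route is to deduce it directly from the corresponding estimate for $u$ by noting that $|V|^2=|\nabla u|^2_g$ (up to the metric) and that elliptic estimates give $\|\nabla u\|_{L^2(B(x,2r))}\lesssim r^{-1}\|u\|_{L^2(B(x,4r))}+\sqrt\lambda\,\|u\|_{L^2(B(x,4r))}$ together with the reverse Caccioppoli-type lower bound, so the doubling index of $|\nabla u|$ is comparable to that of $u$ on a slightly larger ball. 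Granting the $O(\sqrt\lambda)$ doubling bound for $V$, the Donnelly--Fefferman machinery applies: on each analytic chart one obtains an analytic extension of $u$ (hence of each $v_i$) to a complex neighborhood of fixed size, with the number of complex zeros of the complexification of any real-linear combination $\sum a_i v_i$ on a ball controlled by $C\sqrt\lambda$, via Jensen's formula and the doubling inequality.

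From here I would conclude as follows. The critical set is $\Cc_u=\bigcap_{i=1}^n \Nc_{v_i}=\{V=0\}$. Pick, at each point $p\in\Cc_u\cap K_\alpha$, an index $i=i(p)$ which is "dominant" in the sense that on a small ball around $p$ the function $v_{i(p)}$ is not identically zero (such $i$ exists unless $V\equiv0$ near $p$, i.e. $u$ constant, excluded; here one uses that $u$ non-constant forces $\nabla u\not\equiv0$, and real-analyticity propagates this). Then $\Cc_u\cap K_\alpha\subset\bigcup_{i=1}^n \Nc_{v_i}$, and each $\Nc_{v_i}$ is the nodal set of a solution of a second order elliptic equation whose frequency/doubling index is $O(\sqrt\lambda)$; applying the Donnelly--Fefferman nodal volume bound (in the analytic category, exactly as in \cite{DF1}, which only uses ellipticity, analyticity of coefficients, and the doubling bound) yields $\Hc^{n-1}(\Nc_{v_i}\cap K_\alpha)\le C\sqrt\lambda$. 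Summing over $i=1,\dots,n$ and over the finitely many charts gives $\Hc^{n-1}(\Cc_u)\le C\sqrt\lambda$ with $C=C(M)$, as claimed.

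The main obstacle, and the step I would spend the most care on, is the second paragraph: proving that the vector-valued gradient $V$ genuinely satisfies an $O(\sqrt\lambda)$ doubling inequality. The subtlety is that $V$ solves a coupled system rather than a scalar equation, so scalar Carleman estimates do not apply verbatim to an individual $v_i$; one must either run a Carleman estimate adapted to the diagonal-principal-part system (the coupling terms $b,c$ being lower order and $\lambda$-independent, they are absorbable exactly as the potential is in the scalar case), or, more cheaply, transfer the doubling bound from $u$ to $|\nabla u|$ using interior elliptic estimates on nested balls — the price there being to check that the implied constants and the loss in ball radius are uniform in $\lambda$ and can be reabsorbed into the $e^{C\sqrt\lambda}$ factor, which they can because elliptic constants depend only on the metric. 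Everything downstream of this doubling estimate is a direct citation of the analytic-case argument of \cite{DF1}.
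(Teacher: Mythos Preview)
Your overall architecture---reduce to local charts, prove an $e^{C\sqrt\lambda}$ doubling inequality for the gradient vector $V=(\partial_1 u,\dots,\partial_n u)$ via a Carleman estimate for the diagonal-principal-part system, then feed this into the Donnelly--Fefferman complex-analytic machinery---matches the paper's. The paper indeed takes your ``route 1'' for the doubling: it proves the scalar Carleman inequality, applies it componentwise to get a vector Carleman estimate for $\Delta+\lambda$ acting diagonally, and absorbs the coupling terms coming from $[\Delta,\partial_k]$ as lower-order perturbations with $\lambda$-independent coefficients. (Your ``route 2''---transferring doubling from $u$ to $|\nabla u|$ via Caccioppoli and a ``reverse Caccioppoli''---is not made precise; there is no obvious lower bound of $\|\nabla u\|_{B_r}$ by $\|u\|$ on a comparable ball that survives when $u$ is nearly constant, so this route would need real work.)

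The genuine gap is in your final step. You propose to use $\Cc_u\subset\bigcup_i\Nc_{v_i}$ and apply the Donnelly--Fefferman nodal bound to each $v_i$ separately. But the $e^{C\sqrt\lambda}$ doubling you have is for $|V|$, not for an individual component $v_i$; a single $v_i$ need not satisfy any doubling bound with exponent $O(\sqrt\lambda)$, and can even vanish identically on the chart (take $u(x,y)=\sin(2\pi kx)$ on $\T^1\times N$: then $\partial_{y_j}u\equiv 0$, so $\Nc_{v_j}$ is the whole chart and $\Hc^{n-1}(\Nc_{v_j})=\infty$). Your ``dominant index'' remark does not rescue this, because even when $v_i\not\equiv 0$ near $p$, the ratio $\max_{B_\C(1)}|v_i|/\max_{B(1/2)}|v_i|$ is not controlled by $\sqrt\lambda$. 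The paper's fix is simple and decisive: apply Theorem~\ref{df} not to any $v_i$ but to the single scalar function $F=\sum_{i=1}^n(\partial_i u)^2$. This $F$ is real-analytic, extends holomorphically to $B_\C(1)$ because each $\partial_i u$ does, its real zero set is exactly $\Cc_u$, and the doubling of $|V|$ translates directly into $\|F\|_{L^\infty(B_\C(1))}\le e^{C\sqrt\lambda}\|F\|_{L^\infty(B(1/2))}$. One application of the Donnelly--Fefferman theorem to $F$ then gives $\Hc^{n-1}(\Cc_u\cap B(1/4))\le C\sqrt\lambda$, and compactness finishes.
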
 
\vspace{0,5cm}
 
  \vspace{0,3cm}

The main ingredient in the proof of our theorem is the following doubling inequality on gradient of eigenfunctions 
\begin{equation}\label{doubintro}
 \|\nabla u\|_{B_{2r}}\leq e^{C\sqrt{\lambda}}\|\nabla u\|_{B_r}.
\end{equation}

 This estimate is a consequence of a general Carleman-type inequality which we also use to study the vanishing order 
of solutions to the Schr\"odinger equation in a related paper \cite{B1}.

The paper is organised as follows. In the section 2 we deduce from \cite{B1} a Carleman estimate for the operator $\Delta +\lambda$ 
acting diagonally on vector valued functions.
 Using the compactness of $M$, this will allows us  to derive in section 3 doubling estimates \eqref{doubintro}  using 
standard method of quantitative uniqueness. 
In section 3 we use the method developped by H. Donnelly and  C. Fefferman to show our estimate on 
the measure of the critical set in the case that $M$ is an analytic manifold.
One should note that the framework of this paper follows closely \cite{B1} until section 3, with some obvious adaptations to the vectorial case. 
\section{Carleman estimates} 
First we give a Carleman estimate on the scalar operator $\Delta +W$  with $W$ of class $\Cc^1$, this can also be find in \cite{B1} 
and is write down here only for completness (and because of the electronic nature of this document).\newline
\non Fix $x_0$ in $M$,  and let : $r=r(x)=d(x,x_0)$ the Riemannian distance from $x_0$. We denote by $B_r(x_0)$ the geodesic ball centered at  $x_0$ of radius $r$.
We will denote  by $\|\cdot\|$ the  $L^2$ norm. 
Recall that Carleman estimates are weighted integral inequalities with a weight function $e^{\tau\phi}$, where the function 
$\phi$ satisfy some convexity properties. 
Let us now define the weight function we will use.\newline  
For a fixed number $\varepsilon$ such that  $0<\varepsilon<1$ and $T_0<0$, we define the function $f$ on  $]-\infty,T_0[ $ by $f(t)=t-e^{\varepsilon t}$. 
One can check easily that,  for $|T_0|$ great enough, the function $f$ verifies the following properties:
\begin{equation}\label{f}
\begin{array}{lcr}
& 1-\varepsilon e^{\varepsilon T_0}\leq f^\prime(t)\leq 1&\forall t\in]-\infty,T_0[,\\
&\displaystyle{\lim _{t\rightarrow-\infty}-e^{-t}f^{\prime\prime}(t)}=+\infty. &
\end{array}
\end{equation}
Finally we define $\phi(x)=-f(\ln r(x))$. Now we can state the main result of this section:

\begin{thm}\label{tics}
 There exist positive constants $R_0, C,C_1,C_2$, which depend only on  $M$, such that, 
for any $\:W\in \Cc^1(M)$, $x_0\in M$, $u\in C^\infty_0(B_{R_0}(x_0)\setminus\{0\})$ and 
 $\tau \geq C_1\sqrt{\|W\|_{\mathcal{C}^1}}+C_2$, one has    
\begin{equation}\label{S}
C\left\|r^2e^{\tau\phi}\left(\Delta u +Wu \right)\right\|\geq \tau^\frac{3}{2}\left\|r^{\frac{\varepsilon}{2}}e^{\tau\phi}u\right\|
+ \tau^{\frac{1}{2}}\left\|r^{1+\frac{\varepsilon}{2}}e^{\tau\phi}\nabla u\right\|.
\end{equation}

\non Moreover, if  $$\mathrm{supp}(u)\subset\{x\in M; r(x)\geq\delta>0\},$$ then
\begin{equation}\label{Siv2}\begin{array}{rcc}
C\left\|r^2e^{\tau\phi}\left(\Delta u +Wu \right)\right\|&\geq& \tau^\frac{3}{2}\left\|r^{\frac{\varepsilon}{2}}e^{\tau\phi}u\right\| \\
+\ \tau\delta\left\|r^{-1}e^{\tau\phi}u\right\|&+& \tau^{\frac{1}{2}}\left\|r^{1+\frac{\varepsilon}{2}}e^{\tau\phi}\nabla u\right\|.
\end{array}\end{equation}
\end{thm}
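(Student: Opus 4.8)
The plan is to establish this Carleman estimate by a standard conjugation-and-commutator argument, adapting the scalar Carleman inequality proved in \cite{B1}. First I would pass to suitable coordinates: using the geodesic distance $r$ from $x_0$ and the exponential map, I would work on a small ball $B_{R_0}(x_0)$ where the metric is a small perturbation of the Euclidean one, and introduce the logarithmic variable $t=\ln r$ together with the angular variables $\theta\in S^{n-1}$, so that the Laplacian takes the form $\Delta = e^{-2t}(\partial_t^2 + (n-2)\partial_t + \Delta_{S^{n-1}} + \text{l.o.t.})$, where the lower order terms are controlled by the analytic (or merely $C^1$) bounds on the metric. The weight $\phi(x) = -f(\ln r(x)) = -f(t)$ is a function of $t$ alone, and the convexity properties \eqref{f} of $f$ — in particular $f'$ close to $1$ and $-e^{-t}f''(t)\to+\infty$ — are exactly what is needed to make the conjugated operator's commutator positive.

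The core computation is to set $v = e^{\tau\phi}u$, write $P_\tau = e^{\tau\phi}\circ(r^2\Delta)\circ e^{-\tau\phi}$ (the factor $r^2 = e^{2t}$ absorbs the conformal factor), split $P_\tau = P_\tau^+ + P_\tau^-$ into its formally self-adjoint and skew-adjoint parts, and expand $\|P_\tau v\|^2 = \|P_\tau^+ v\|^2 + \|P_\tau^- v\|^2 + \langle [P_\tau^+,P_\tau^-]v,v\rangle$. The commutator term, after integration by parts, produces a positive quadratic form in $v$ and $\nabla v$ with weights that are powers of $\tau$ times powers of $r$; tracking the $\varepsilon$-dependence coming from $f(t)=t-e^{\varepsilon t}$ gives precisely the weights $r^{\varepsilon/2}$ on the zeroth-order term (with $\tau^{3}$ after taking square roots back, i.e. $\tau^{3/2}$ in \eqref{S}) and $r^{1+\varepsilon/2}$ on the gradient term (with $\tau^{1/2}$). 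Then one handles the potential: $Wu$ contributes $\|r^2 e^{\tau\phi}Wu\|\le \|W\|_{\mathcal C^1} r^{\varepsilon/2}\|r^{2-\varepsilon/2}e^{\tau\phi}u\|$, which for $r\le R_0$ small is absorbed into the leading $\tau^{3/2}\|r^{\varepsilon/2}e^{\tau\phi}u\|$ term provided $\tau\ge C_1\sqrt{\|W\|_{\mathcal C^1}}+C_2$ — this is the threshold condition in the statement. This is essentially a transcription of the argument in \cite{B1}, so I would invoke that reference for the detailed commutator bookkeeping rather than redo it.

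For the improved inequality \eqref{Siv2} under the support hypothesis $r\ge\delta$, the idea is that one can afford to keep an extra, non-scale-invariant term in the commutator estimate. Concretely, in the lower bound for $\langle[P_\tau^+,P_\tau^-]v,v\rangle$ there is a term of the form $\tau\int(\text{positive})\,|v|^2$ whose weight behaves like $r^{-2}$ times a factor that, when $r$ is bounded below by $\delta$, can be estimated from below by $\delta\cdot r^{-2}$ after accounting for one power of $r$; translating back through the substitution $v=e^{\tau\phi}u$ and the $L^2$ norm on $M$ yields the extra summand $\tau\delta\|r^{-1}e^{\tau\phi}u\|$. The point is simply that the cutoff away from $x_0$ removes the degeneracy of this term at the origin.

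The main obstacle I expect is the careful accounting of the commutator $[P_\tau^+, P_\tau^-]$: one must integrate by parts in both $t$ and the angular variables, keep track of all the lower-order contributions from the non-flat metric (these are $O(e^{\varepsilon t}) = O(r^\varepsilon)$ relative to the main terms and hence absorbable for $R_0$ small, but this must be verified), and check that the coefficient of every term one wishes to discard has the right sign or the right smallness in $r$ and $\tau$. The use of $f(t)=t-e^{\varepsilon t}$ rather than a purely linear or purely convex weight is what makes the scheme work: $f'$ near $1$ keeps the principal part close to the model operator $\partial_t^2 + \Delta_{S^{n-1}}$ conjugated by $e^{-\tau t}$, while the term $-e^{\varepsilon t}$ supplies, via $-e^{-t}f''(t) = \varepsilon^2 e^{(\varepsilon-1)t}\to+\infty$, the convexity needed to dominate the angular Laplacian and produce the gradient term on the right-hand side. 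Since the scalar case is exactly \cite{B1}, the only genuinely new remark for the present paper is that all of this passes verbatim to vector-valued $u$ when $\Delta+W$ acts diagonally, which I would state explicitly at the end.
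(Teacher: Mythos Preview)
Your outline is broadly correct in spirit --- change to the variable $t=\ln r$, conjugate, expand $\|P_\tau v\|^2$, and collect positive terms --- and the paper organizes the same ingredients slightly differently (it does not use the self-adjoint/skew split explicitly but rather writes $\|L_\tau u\|_f\ge I-II$, squares $I$, and decomposes the cross term directly). That organizational difference is harmless.

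There is, however, a genuine gap in your treatment of the potential. You propose to handle $W$ as an external perturbation, bounding $\|r^2e^{\tau\phi}Wu\|$ by (essentially) $\|W\|_\infty\,R_0^{2-\varepsilon/2}\,\|r^{\varepsilon/2}e^{\tau\phi}u\|$ and absorbing this into $\tau^{3/2}\|r^{\varepsilon/2}e^{\tau\phi}u\|$. But that absorption requires $\tau^{3/2}\gtrsim\|W\|$, i.e.\ $\tau\ge C\|W\|^{2/3}$, not $\tau\ge C\sqrt{\|W\|_{\mathcal C^1}}$ as you assert. For the application $W=\lambda$ this would yield $\tau\ge C\lambda^{2/3}$ and hence a doubling constant $e^{C\lambda^{2/3}}$ rather than $e^{C\sqrt\lambda}$, destroying the sharp bound in Theorem~\ref{principal}.

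The paper avoids this by building $e^{2t}W$ into the conjugated operator $L_\tau$ from the outset. The potential then appears in the cross term $I_3$ (specifically in $J_3$) and in $K_3$ with a prefactor of $\tau$, and after one integration by parts in $t$ and $\theta$ the resulting terms involve $\tau\, e^{2t}\,\partial_t W$ and $\tau\, e^{2t}\,\partial_i W$. These are dominated by the $\tau^3$ terms precisely when $\tau^2\gtrsim\|W\|_{\mathcal C^1}$, which is the origin of both the square-root threshold and the need for the $\mathcal C^1$ norm. So the key idea you are missing is that $W$ must sit inside the commutator computation, not be peeled off afterwards.

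Your description of the improved estimate \eqref{Siv2} is also too vague. The paper does not extract the extra $\tau\delta\|r^{-1}e^{\tau\phi}u\|$ term from the commutator; it comes instead from the $\tau^2\|\partial_t u\|^2$ term already present in \eqref{ssu}, combined with the one-dimensional Hardy-type inequality $\int u^2 e^{-t}\,dt \le C e^{-T_1}\int(\partial_t u)^2\,dt$ (obtained via Cauchy--Schwarz on $\int\partial_t(u^2)e^{-t}\,dt$), which is where the factor $\delta=e^{T_1}$ enters.
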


\begin{proof}Hereafter $C$, $C_1$, $C_2$  and $c$ denote positive constants depending only upon $M$, though their values may change from one line to another. 
Without loss of generality, we  may suppose that all functions are real.   
We now introduce the polar geodesic coordinates $(r,\theta)$ near $x_0$. Using Einstein notation, the Laplace operator takes the form : 
$$r^2\Delta u=r^2\partial_r^2u+r^2\left(\partial_r\ln(\sqrt{\gamma})+\frac{n-1}{r}\right)\partial_ru+
\frac{1}{\sqrt{\gamma}}\partial_i(\sqrt{\gamma}\gamma^{ij}\partial_ju),$$
where $\displaystyle{\partial_i=\frac{\partial}{\partial\theta_i}}$ and for each fixed $r$,  $\ \gamma_{ij}(r,\theta)$  
is a metric on \: $\Sb^{n-1}$ and $\displaystyle{\gamma=\mathrm{det}(\gamma_{ij})}$.\newline
Since $(M,g)$ is smooth, we have for $r$ small enough :  
\begin{eqnarray}\label{m1}
\partial_r(\gamma^{ij})&\leq& C (\gamma^{ij})\ \ \ \mbox{(in the sense of tensors)}; 
\nonumber \\
|\partial_r(\gamma)|&\leq& C;\\
C^{-1}\leq\gamma&\leq& C.\nonumber 
 \end{eqnarray}

\non Set  $r=e^t$, we have $\displaystyle{\frac{\partial}{\partial r}=e^{-t}\frac{\partial}{\partial t}}$. 
Then the function $u$ is supported in \\ $]-\infty,T_0[\times\mathbb{S}^{n-1},$ 
where $|T_0|$ will be chosen large enough. In this new variables, we can write : 
$$e^{2t}\Delta u=\partial_t^2u+(n-2+\partial_t\mathrm{ln}\sqrt{\gamma})\partial_tu
+\frac{1}{\sqrt{\gamma}}\partial_i(\sqrt{\gamma}\gamma^{ij}\partial_ju).$$
The conditions (\ref{m1}) become
\begin{eqnarray}\label{m2}
\partial_t(\gamma^{ij})&\leq& Ce^t (\gamma^{ij})\nonumber\ \ \ \mbox{(in the sense of tensors)};  \\
|\partial_t(\gamma)|&\leq& Ce^t;\\
C^{-1}\leq\gamma &\leq & C. \nonumber
\end{eqnarray}
Now we introduce the conjugate operator : 
\begin{equation}\begin{array}{rcl}
L_\tau(u)&=&e^{2t}e^{\tau\phi}\Delta(e^{-\tau\phi}u)+e^{2t}Wu\\
               &=&\partial^2_tu+\left(2\tau f^\prime+n-2+\partial_t\mathrm{ln}\sqrt{\gamma}\right)\partial_tu\\
               &+&\left(\tau^2f^{\prime^2}+\tau f^{\prime\prime}+(n-2)\tau f^{\prime}+\tau\partial_t\mathrm{ln}\sqrt{\gamma}f^{\prime}\right)u\\
               &+&\Delta_\theta u+e^{2t}Wu,
     \end{array}\end{equation}
with $$\Delta_\theta u=\frac{1}{\sqrt{\gamma}}\partial_i\left(\sqrt{\gamma}\gamma^{ij}\partial_ju\right).$$

\noindent It will be useful for us to introduce the following $L^2$ norm on $]-\infty,T_0[\times\Sb^{n-1} $: 
$$\|V\|_f^2=\int_{]-\infty,T_0[\times\Sb^{n-1}} V^2\sqrt{\gamma}{f^{\prime}}^{-3}dtd\theta,$$ where $d\theta$ is the usual measure on $\Sb^{n-1}$.
The corresponding inner product is denoted by  $\left\langle\cdot,\cdot\right\rangle_f$ , \emph{i.e} $$\langle u,v\rangle_f = \int uv\sqrt{\gamma}{f^{\prime}}^{-
3}dtd\theta.$$ 
\noindent We will estimate from below $\|L_\tau u\|^2_f$ by using elementary algebra and integrations by parts. We are concerned, in the computation, by the power of $\tau$ 
and  exponenial decay when $t$ goes to $-\infty$. First by triangular inequality one has
\begin{equation}
\|L_\tau(u)\|_f\geq I-I\!I ,
\end{equation}
with 
\begin{equation}\begin{array}{rcl}
I&=& \left\|\partial^2_tu+2\tau f^\prime\partial_tu+\tau^2f^{\prime^2}u+e^{2t}Wu+\Delta_\theta u\right\|_f,\\
I\!I&=&\left\|\tau f^{\prime\prime}u+(n-2)\tau f^\prime u+\tau\partial_t\mathrm{ln}\sqrt{\gamma}f^{\prime}u\right\|_f\\
&+&\left\|(n-2)\partial_tu+\partial_t\ln\sqrt{\gamma}\partial_tu\right\|_f.
\end{array}\end{equation}
We will be able to absorb $I\!I$ later. Then we compute $I^2$ :

$$I^2=I_1+I_2+I_3,$$ with 

\begin{equation}\begin{array}{rcl}
I_1&=&\|\partial^2_tu+(\tau^2f^{\prime^2}+e^{2t}W)u+\Delta_\theta u\|_f^2\\
I_2&=&\|2\tau f^\prime\partial_tu\|_f^2\\
I_3&=&2\left<2\tau f^{\prime}\partial_tu ,\partial^2_tu+\tau^2f^{\prime^2}u+e^{2t}Wu+\Delta_\theta u\right>_f\label{I}
\end{array}\end{equation}
           
\non In order to compute $I_3$ we write it in a convenient way: 
\begin{equation}
 I_3=J_1+J_2+J_3,
\end{equation}
where the integrals $J_i$ are defined by :
\begin{equation}\label{I3}
\begin{array}{rcl}
J_1&=&2\tau \int f^\prime \partial_t(|\partial_tu|^2)f^{\prime^{-3}}\sqrt{\gamma}dtd\theta\\
J_2&=&4\tau\int f^\prime\partial_tu\partial_i\left(\sqrt{\gamma}\gamma^{ij}\partial_ju\right)f^{\prime^{-3}}dtd\theta\\
J_3&=&\int\left(2\tau^3 (f^\prime)^3+2\tau f^\prime e^{2t}W\right)2u\partial_tuf^{\prime^{-3}}\sqrt{\gamma}dtd\theta.
\end{array}
\end{equation}
\noindent Now we will use integration by parts to estimate each terms of \eqref{I3}. 
Note that $f$ is radial and that $2\partial_tu\partial_t^2u=\partial_t(|\partial_tu|^2)$. We find that  :
\begin{equation*}\begin{array}{rcl}
J_1&=& \int\left(4\tau f^{\prime\prime}\right)|\partial_tu|^2f^{\prime^{-3}}\sqrt{\gamma}dtd\theta\\
&-&\int2\tau f^\prime\partial_t\mathrm{ln}\sqrt{\gamma}|\partial_{t}u|^2f^{\prime^{-3}}\sqrt{\gamma}dtd\theta.\end{array}
\end{equation*}
The conditions \eqref{m2} imply that $|\partial_t\ln\sqrt{\gamma}|\leq Ce^t$. Then properties \eqref{f} on $f$ gives, for large $|T_0|$ 
that $|\partial_t\ln\sqrt{\gamma}|$ is small compared to $|f^{\prime\prime}|$. Then one has   
\begin{equation}\label{J_1}J_1\geq -c\tau\int |f^{\prime\prime}|\cdot|\partial_tu|^2f^{\prime^{-3}}\sqrt{\gamma}dtd\theta.\end{equation}

\noindent Now in order to estimate $J_2$ we first integrate by parts with respect to $\partial_i$ : 
\begin{equation*}\begin{array}{rcl}
J_2
   &=&-2\int2\tau f^{\prime}\partial_t\partial_iu\gamma^{ij}\partial_juf^{\prime^{-3}}\sqrt{\gamma}dtd\theta.                  
     \end{array}
\end{equation*}            
Then we integrate by parts with respect to $\partial_t$. We get : 
\begin{equation*}\begin{array}{rcl}
J_2&=&-4\tau\int f^{\prime\prime}\gamma^{ij}\partial_iu\partial_juf^{\prime^{-3}}\sqrt{\gamma}dtd\theta\\
&+&\int2\tau f^{\prime}\partial_t\mathrm{ln}\sqrt{\gamma}\gamma^{ij}\partial_iu\partial_juf^{\prime^{-3}}\sqrt{\gamma}dtd\theta\\
&+&\int2\tau f^\prime\partial_t(\gamma^{ij})\partial_iu\partial_juf^{\prime^{-3}}\sqrt{\gamma}dtd\theta.
\end{array}
\end{equation*}
We denote  $|D_\theta u|^2=\partial_iu\gamma^{ij}\partial_ju$. Now using that $-f^{\prime\prime}$ is non-negative and $\tau$ is large, 
the conditions \eqref{f}  and \eqref{m2} gives for $|T_0|$ large enough:
\begin{equation}\label{J_2}
 J_2\geq 3\tau\int|f^{\prime\prime}|\cdot|D_\theta u|^2f^{\prime^{-3}}\sqrt{\gamma}dtd\theta.
\end{equation}
Similarly computation of $J_3$ gives :
\begin{equation}\label{J_31}\begin{array}{rcl}
J_3&=&-2\int\tau^3\partial_t\mathrm{ln}(\sqrt{\gamma})u^2\sqrt{\gamma}dtd\theta\\
&-&\int(4f^\prime-4 f^{\prime\prime}+2f^{\prime}\partial_t\ln\sqrt{\gamma} )\tau e^{2t}Wu^2f^{\prime^{-3}}\sqrt{\gamma}dtd\theta\\
&-&\int2\tau f^\prime e^{2t}\partial_tW|u|^2f^{\prime^{-3}}\sqrt{\gamma}dtd\theta.
\end{array}\end{equation}
\noindent Now we assume that \begin{equation}\tau\geq C_1\sqrt{\|W\|_{\mathcal{C}^1}}+C_2.\end{equation}
 From  \eqref{f} and \eqref{m2} one can see that if $C_1$, $C_2$ and $|T_0|$ are large enough, then\:\newline
\begin{equation}\label{J_3}
J_3\geq-c\tau^3\int e^t|u|^2f^{\prime^{-3}}\sqrt{\gamma}dtd\theta.
\end{equation}

 \noindent Thus far, using \eqref{J_1},\eqref{J_2} and \eqref{J_3}, we have : 
 \begin{equation}\label{I_3}\begin{array}{rcl}
 I_3 &\geq & 3\tau \int\left|f^{\prime\prime}\right|\left|D_\theta u\right|^2f^{\prime^{-3}}\sqrt{\gamma}dtd\theta -c\tau^3\int e^t|u|^2f^{\prime^{-3}}\sqrt{\gamma}dtd\theta\\
 &-&c\tau\int\left|f^{\prime\prime}\right|\left|\partial_t u\right|^2f^{\prime^{-3}}\sqrt{\gamma}dtd\theta.
 \end{array}\end{equation}

\noindent Now we consider  $I_1$ :
$$I_1=\left\|\partial^2_tu+\left(\tau^2
f^{\prime^2}+e^{2t}W\right)u+\Delta_\theta u\right\|_f^2 .$$
Let $\rho>0$ a small number to be chosen later. Since  $|f^{\prime\prime}|\leq1$ and $\tau\geq1$, we have : \newline
\begin{equation}I_1\geq\frac{\rho}{\tau}I_1^\prime,\end{equation}
where $I_1^\prime$ is defined by :
\begin{equation}I_1^\prime=\left\|\sqrt{|f^{\prime\prime}|}\left[\partial^2_tu+\left(\tau^2f^{\prime^2}+e^{2t}W\right)u+\Delta_\theta u\right]\right\|_f^2 \end{equation}
and one has \begin{equation}I_1^\prime=K_1+K_2+K_3,\end{equation} 
with 
\begin{equation}\label{Ki}
\begin{array}{rcl}
  K_1&=&\left\|\sqrt{|f^{\prime\prime}|}\left(\partial_t^2u+\Delta_\theta u\right)\right\|_f^2, \\
K_2&=&\left\|\sqrt{|f^{\prime\prime}|}\left(\tau^2f^{\prime^2}+e^{2t}W\right)u\right\|_f^2,  \\
K_3&=&2\left\langle\left(\partial_t^2u+\Delta_\theta u\right)\left|f^{\prime\prime}\right|,\left(\tau^2f^{\prime^2}+e^{2t}W\right)u\right\rangle_f.
\end{array}\end{equation}
Integrating by parts gives : 
\begin{equation}\label{K_3}\begin{array}{rcl}
K_3&=&2\int f^{\prime\prime}\left(\tau^2f^{\prime^2}+e^{2t}W\right)|\partial_tu|^2f^{\prime^{-3}}\sqrt{\gamma}dtd\theta\\
&+&2\int \partial_t\left[f^{\prime\prime}\left(\tau^2f^{\prime^2}+e^{2t}W \right)\right]\partial_tuu\sqrt{\gamma}f^{\prime^{-3}}dtd\theta \\
&-&6\int \left(f^{\prime\prime^2}f^{\prime^{-1}}\left(\tau^2f^{\prime^2}+e^{2t}W \right)\right)\partial_tuu\sqrt{\gamma}f^{\prime^{-3}}dtd\theta \\
&+&2\int f^{\prime\prime}\left(\tau^2f^{\prime^2}+e^{2t}W\right)\partial_t\mathrm{ln}\sqrt{\gamma}\partial_tuuf^{\prime^{-3}}\sqrt{\gamma}dtd\theta\\
&+&2\int f^{\prime\prime}\left(\tau^2f^{\prime^2}+e^{2t}W\right)|D_\theta u|^2f^{\prime^{-3}}\sqrt{\gamma}dtd\theta\\
&+&2\int f^{\prime\prime}e^{2t}\partial_iW\cdot\gamma^{ij}\partial_juuf^{\prime^{-3}}\sqrt{\gamma}dtd\theta.
\end{array}\end{equation}
 The condition $\tau\geq C_1\sqrt{\|W\|_{\Cc^1}}+C_2$ implies, $$|\partial_iW\gamma^{ij}\partial_juu|\leq c\tau^2(|D_\theta u|^2+|u|^2).$$
Now since  $2\partial_tuu\leq u^2+|\partial_tu|^2$, we can  use conditions \eqref{f} and \eqref{m2} to get

\begin{equation}
 K_3\geq-c\tau^2\int|f^{\prime\prime}|\left(|\partial_tu|^2+|D_\theta u|^2+|u|^2\right)f^{\prime^{-3}}\sqrt{\gamma}dtd\theta\\
\end{equation}
We also have 

\begin{equation}
K_2\geq c\tau^4\int|f^{\prime\prime}||u|^2f^{\prime^{-3}}\sqrt{\gamma}dtd\theta
\end{equation}
and since  $K_1\geq 0$ ,  
\begin{equation}\label{I_1}\begin{array}{rcl}
I_1&\geq&-\rho c\tau\int|f^{\prime\prime}|\left(|\partial_t u|^2+|D_\theta u|^2\right)f^{\prime^{-3}}\sqrt{\gamma}dtd\theta\\
&+&C\tau^{3}\rho\int|f^{\prime\prime}||u|^2f^{\prime^{-3}}\sqrt{\gamma}dtd\theta.
\end{array}\end{equation}

\noindent Then using  \eqref{I_3} and \eqref{I_1} 
\begin{equation}\label{pro}\begin{array}{rcl}
I^2  &\geq & 4\tau^2\| f^\prime\partial_tu\|_f^2+3\tau\int|f^{\prime\prime}||D_\theta u|^2f^{\prime^{-3}}\sqrt{\gamma}dtd\theta \\
&+&C\tau^{3}\rho\int|f^{\prime\prime}||u|^2f^{\prime^{-3}}\sqrt{\gamma}dtd\theta-c\tau^3\int e^t|u|^2f^{\prime^{-3}}\sqrt{\gamma}dtd\theta\\
&-&\rho c\tau\int|f^{\prime\prime}|\left(|u|^2+|\partial_tu|^2+|D_\theta u|^2\right)f^{\prime^{-3}}\sqrt{\gamma}dtd\theta.\\
&-&c\tau\int|f^{\prime\prime}||\partial_tu|^2f^{\prime^{-3}}\sqrt{\gamma}dtd\theta
\end{array}\end{equation}

\noindent
Now one needs to check that every non-positive term in the right hand side of \eqref{pro} can be absorbed in the first three terms. 
\\ First fix $\rho$ small enough such that
$$\rho c\tau\int|f^{\prime\prime}|\cdot|D_\theta u|^2{f^{\prime}}^{-3}\sqrt{\gamma}dtd\theta\leq 2\tau\int|f^{\prime\prime}|\cdot|D_\theta u|^2f^{\prime^{-3}}\sqrt{\gamma}dtd\theta$$
where $c$ is the constant appearing in \eqref{pro}. The other  terms in the last integral of \eqref{pro} can then be absorbed by comparing powers of $\tau$ 
(for $C_2$ large enough). 
Finally since conditions \eqref{f} imply that $e^t$ is small compared to $|f^{\prime\prime}|$, 
we can absorb   $-c\tau^3e^t|u|^2$ in $C\tau^{3}\rho|f^{\prime\prime}||u|^2$.

\noindent Thus we obtain :
\begin{equation}\label{ssu}\begin{array}{rcl}
 I^2 &\geq &C\tau^2\int|\partial_t u|^2f^{\prime^{-3}}\sqrt{\gamma}dtd\theta+C\tau\int|f^{\prime\prime}||D_\theta u|^2f^{\prime^{-3}}\sqrt{\gamma}dtd\theta\\
&+ &C\tau^{3}\int|f^{\prime\prime}||u|^2f^{\prime^{-3}}\sqrt{\gamma}dtd\theta
\end{array}\end{equation}

\noindent As before, we can check that $I\!I$ can be absorbed in $I$ for  $|T_0|$ and $\tau$ large enough. 
Then we obtain \begin{equation}\label{bla}\|L_\tau u\|_f^2\geq C\tau^3\|\sqrt{|f^{\prime\prime}|} u\|_f^2+C\tau^2\|\partial_t u\|_f^2
+C\tau\|\sqrt{|f^{\prime\prime}|}D_\theta u\|_f^2 .\end{equation}
Note that, since $\tau$ is large and $\sqrt{|f^{\prime\prime}|}\leq1$, one has
 \begin{equation}\|L_\tau u\|_f^2\geq C\tau^3\|\sqrt{|f^{\prime\prime}|} u\|_f^2+c\tau\|\sqrt{|f^{\prime\prime}|}\partial_t u\|_f^2
+C\tau\|\sqrt{|f^{\prime\prime}|}D_\theta u\|_f^2,\end{equation}
and the constant $c$ can be choosen arbitrary smaller than $C$. 
If we set  $v=e^{-\tau\phi}u$, then we have 
\begin{equation*}
 \begin{array}{rcl}\|e^{2t}e^{\tau\phi}(\Delta v+Wv)\|_f^2&\geq& C\tau^3\|\sqrt{|f^{\prime\prime}|}e^{\tau\phi} v\|_f^2
-c\tau^3\|\sqrt{|f^{\prime\prime}|}f^\prime e^{\tau\phi} v\|_f^2\\
+
\frac{c}{2}\tau\|\sqrt{|f^{\prime\prime}|}e^{\tau\phi}\partial_t v\|_f^2&+&C\tau\|\sqrt{|f^{\prime\prime}|}e^{\tau\phi} D_\theta v\|_f^2\end{array}.
\end{equation*}
Finally since $f^\prime$ is close to 1 one can absorb the negative term to obtain
 
\begin{equation}\begin{array}{rcl}\|e^{2t}e^{\tau\phi}(\Delta v+Wv)\|_f^2&\geq& C\tau^3\|\sqrt{|f^{\prime\prime}|}e^{\tau\phi} v\|_f^2\\
+
C\tau\|\sqrt{|f^{\prime\prime}|}e^{\tau\phi}\partial_t v\|_f^2&+&C\tau\|\sqrt{|f^{\prime\prime}|}e^{\tau\phi} D_\theta v\|_f^2\end{array}.\end{equation}
It remains to get back to the usual $L^2$ norm. First note that since $f^\prime$ is close to 1 \eqref{f},
 we can get the same estimate without the term $(f^\prime)^{-3}$ in the integrals. Recall that in polar coordinates $(r,\theta)$ the volume element 
is $r^{n-1}\sqrt{\gamma}drd\theta$, we can deduce from \eqref{ssu} by substitution that : 
\begin{equation}
 \begin{array}{rcl}
  \|r^2e^{\tau\phi}(\Delta v+Wv)r^{-\frac{n}{2}}\|^2&\geq&C\tau^3\|r^\frac{\varepsilon}{2}e^{\tau\phi}vr^{-\frac{n}{2}}\|^2\\
&+&C\tau\|r^{1+\frac{\varepsilon}{2}}e^{\tau\phi}\nabla vr^{-\frac{n}{2}}\|^2.
 \end{array}
\end{equation}
Finally one can get rid of the term $r^{-\frac{n}{2}}$ by replacing $\tau$ with $\tau+\frac{n}{2}$. Indeed from 
$e^{\tau\phi}r^{-\frac{n}{2}}=e^{(\tau+\frac{n}{2})\phi}e^{-\frac{n}{2}r^\varepsilon}$ one can check easily that, for $r$ small enough  
$$\frac{1}{2}e^{(\tau+\frac{n}{2})\phi}\leq e^{\tau\phi}r^{-\frac{n}{2}}\leq e^{(\tau+\frac{n}{2})\phi}.$$ 

\non This achieves the  proof of the first part of theorem \ref{tics}.\\

\non Now suppose that  $\mathrm{supp}(u)\subset\{x\in M; r(x)\geq\delta>0\}$ and define $T_1=\ln\delta$.\newline

\noindent Cauchy-Schwarz inequality apply to $$\int\partial_t(u^2)e^{-t}\sqrt{\gamma}dtd\theta=2\int u\partial_tue^{-t}\sqrt{\gamma}dtd\theta$$ 
gives
\begin{equation}\label{d1}
 \int\partial_t(u^2)e^{-t}\sqrt{\gamma}dtd\theta\leq 2\left(\int\left(\partial_tu\right)^2e^{-t}\sqrt{\gamma}dtd\theta \right)^{\frac{1}{2}}
\left(\int u^2e^{-t}\sqrt{\gamma}dtd\theta\right)^{\frac{1}{2}}.
\end{equation}
On the other hand, integrating by parts gives
 \begin{equation}
      \int\partial_t(u^2)e^{-t}\sqrt{\gamma}dtd\theta = \int u^2e^{-t}\sqrt{\gamma}dtd\theta-\int u^2e^{-t}\partial_t(\ln(\sqrt{\gamma}))\sqrt{\gamma}dtd\theta.
     \end{equation}
Now since  $|\partial_t\ln\sqrt{\gamma}|\leq Ce^t$ for $|T_0|$ large enough we can deduce :
\begin{equation}\label{d2}
 \int\partial_t(u^2)e^{-t}\sqrt{\gamma}dtd\theta \geq c  \int u^2e^{-t}\sqrt{\gamma}dtd\theta.
\end{equation}
Combining \eqref{d1} and  \eqref{d2} gives  
\begin{eqnarray*}\label{prout}
 c^2 \int u^2e^{-t}\sqrt{\gamma}dtd\theta&\leq& 4\int\left(\partial_tu\right)^2e^{-t}\sqrt{\gamma}dtd\theta\\
&\leq&4e^{-T_1}\int\left(\partial_t u\right)^2\sqrt{\gamma}dtd\theta.\end{eqnarray*}

\noindent Finally, droping all terms except  $\tau^2\int|\partial_t u|^2f^{\prime^{-3}}\sqrt{\gamma}dtd\theta$  in \eqref{ssu}  gives :

\begin{eqnarray*}
C^{\prime}I^2\geq \tau^2 \delta^2 \|e^{-t}u\|_f^2.
\end{eqnarray*}\vspace{0,5cm}
Inequality \eqref{ssu} can then be replaced by :  
\begin{equation}\begin{array}{rcl}
 I^2 &\geq &C\tau^2\int|\partial_t u|^2f^{\prime^{-3}}\sqrt{\gamma}dtd\theta+C\tau\int|f^{\prime\prime}|\cdot|D_\theta u|^2f^{\prime^{-3}}\sqrt{\gamma}dtd\theta\\
&+ &C\tau^{3}\int|f^{\prime\prime}|\cdot|u|^2f^{\prime^{-3}}\sqrt{\gamma}dtd\theta+C\tau^2 \delta^2\int|u|^2{f^\prime}^{-3}\sqrt{\gamma}dtd\theta.
\end{array}\end{equation}
The rest of the proof follows in a similar way than the first part. 
\end{proof}


Now we will establish a Carleman estimate for the operator  $\Delta+\lambda $ acting on vector functions, which will be useful in the next section. 
For $U\in\mathcal{C}^{\infty}_0(B_{R_0}(x_0)\setminus\{x_0\},\mathbb{R}^m)$, applying (\ref{S})  to each components $U^i$ of $U$ and summing gives : 
\begin{cor}\label{carv}
There exist non-negative constants $R_0, C,C_1$, which depend only on  $M$ and $\varepsilon$, such that : \newline  
$\forall x_0\in M,\:\forall\: U\:\in\mathcal{C}^{\infty}_0(B_{R_0}(x_0)\setminus\{x_0\},\mathbb{R}^m),\ \forall\  \tau\geq C_1\sqrt{\lambda}, $
 
\begin{eqnarray}C\left\|r^2e^{-\tau\phi}\left(\Delta U +\lambda U \right)\right\|&\geq& \tau^\frac{3}{2}\left\|r^{\frac{\varepsilon}{2}} e^{-\tau\phi}U\right\| \nonumber\\
& +& \tau^{\frac{1}{2}}\left\|r^{1+\frac{\varepsilon}{2}}e^{-\tau\phi}\nabla U\right\|
\end{eqnarray}
Moreover, 
$$\mathrm{If}\:\:\:\mathrm{supp}(U)\subset\{x\in M; r(x)\geq\delta>0\},$$ then
\begin{eqnarray}\label{Sivv2}
C\left\|r^2e^{-\tau\phi}\left(\Delta U+\lambda U\right)\right\|&\geq& \tau^\frac{3}{2}\left\|r^{\frac{\varepsilon}{2}}e^{-\tau\phi}U\right\| \nonumber\\
+\ \tau\delta\left\|r^{-1}e^{-\tau\phi}U\right\|&+& \tau^{\frac{1}{2}}\left\|r^{1+\frac{\varepsilon}{2}}e^{-\tau\phi}\nabla U\right\|.
\end{eqnarray}
\end{cor}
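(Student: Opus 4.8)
The plan is to reduce to the scalar estimate of Theorem~\ref{tics} applied component by component, with the $\mathcal{C}^1$ potential taken to be the constant function $W\equiv\lambda$. Write $U=(U^1,\dots,U^m)$ with each $U^i\in\mathcal{C}^{\infty}_0(B_{R_0}(x_0)\setminus\{x_0\})$; since $\Delta$ acts diagonally, the $i$-th component of $\Delta U+\lambda U$ is exactly $\Delta U^i+\lambda U^i$. As $W$ is constant, $\|W\|_{\mathcal{C}^1}=\lambda$, so the hypothesis $\tau\geq C_1\sqrt{\|W\|_{\mathcal{C}^1}}+C_2$ of Theorem~\ref{tics} becomes $\tau\geq C_1\sqrt{\lambda}+C_2$; since in the intended application $\lambda$ is an eigenvalue of the Laplacian on the compact manifold $M$, hence bounded below by a positive constant depending only on $M$, the additive constant $C_2$ can be absorbed into the multiplicative one and the hypothesis may be rewritten $\tau\geq C_1\sqrt{\lambda}$ after renaming $C_1$.

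Fix such a $\tau$ and keep the weight notation of Theorem~\ref{tics}. For each $i$, inequality~\eqref{S} reads $C A_i\geq B_i+D_i$, where $A_i=\|r^2e^{\tau\phi}(\Delta U^i+\lambda U^i)\|$, $B_i=\tau^{3/2}\|r^{\varepsilon/2}e^{\tau\phi}U^i\|$ and $D_i=\tau^{1/2}\|r^{1+\varepsilon/2}e^{\tau\phi}\nabla U^i\|$ are nonnegative. Squaring and discarding the nonnegative cross term $2B_iD_i$ gives $C^2A_i^2\geq B_i^2+D_i^2$. Summing over $i=1,\dots,m$ and using that the weights $r$ and $e^{\tau\phi}$ do not depend on $i$ while $|U|^2=\sum_i|U^i|^2$ and $|\nabla U|^2=\sum_i|\nabla U^i|^2$, the partial sums reassemble into the vector-valued $L^2$ norms, so that
\[
C^2\left\|r^2e^{\tau\phi}(\Delta U+\lambda U)\right\|^2\geq\tau^3\left\|r^{\frac{\varepsilon}{2}}e^{\tau\phi}U\right\|^2+\tau\left\|r^{1+\frac{\varepsilon}{2}}e^{\tau\phi}\nabla U\right\|^2.
\]
Taking square roots and applying the elementary bound $\sqrt{x^2+y^2}\geq\frac{1}{\sqrt{2}}(x+y)$ for $x,y\geq0$ then yields the first asserted inequality, after replacing $C$ by $\sqrt{2}\,C$.

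The second inequality follows in exactly the same manner, starting from~\eqref{Siv2}: under the extra hypothesis $\mathrm{supp}(U)\subset\{r\geq\delta\}$ each component satisfies $CA_i\geq B_i+F_i+D_i$ with the additional nonnegative term $F_i=\tau\delta\|r^{-1}e^{\tau\phi}U^i\|$; squaring, dropping the nonnegative cross terms, summing over $i$, and using $\sqrt{x^2+y^2+z^2}\geq\frac{1}{\sqrt{3}}(x+y+z)$ gives the claim after adjusting $C$. There is essentially no obstacle here: all the analytic content --- the convexity of the weight $\phi$, the integrations by parts, the absorption of lower-order terms --- is already contained in the proof of Theorem~\ref{tics}, and the only points requiring any care are the passage from the scalar inequalities written in the $\ell^1$ form $CA_i\geq B_i+D_i$ to a single inequality for the $\ell^2$-aggregated quantities (handled by the elementary comparison of the $\ell^1$ and $\ell^2$ norms on $\mathbb{R}^2$, resp.\ $\mathbb{R}^3$), together with the harmless adjustment of the $\tau$-threshold when $\lambda$ plays the role of $\|W\|_{\mathcal{C}^1}$.
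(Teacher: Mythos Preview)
Your proof is correct and follows exactly the approach indicated in the paper, which consists of a single sentence: apply Theorem~\ref{tics} with $W\equiv\lambda$ to each component $U^i$ and sum. You have simply made explicit the routine details of this summation (squaring, dropping nonnegative cross terms, reassembling the vector norms, and comparing $\ell^1$ and $\ell^2$) and correctly justified the absorption of the additive constant $C_2$ into $C_1\sqrt{\lambda}$ via the positive lower bound on nonzero Laplace eigenvalues of the compact manifold $M$.
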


\section{Doubling inequality}
In this section we intend to prove a doubling property for gradient of eigenfunctions.
First we establish a  three sphere theorem  :
\begin{prop}[Three spheres theorem]
There exist non-negative constants $R_0$, $c$ and $0<\alpha<1$  wich depend only on $M$ such that, if $u$ is a solution to \eqref{fp} one has :\newline 
 $\forall R;\  0<R<2R<R_0, \forall x_0\in M, $
\begin{equation}\label{ttc}
\|\nabla u\|_{B_R(x_0)}\leq e^ {c \sqrt{\lambda}}\|\nabla u\|_{B_{\frac{R}{2}}(x_0)}^\alpha\|\nabla u\|_{B_{2R}(x_0)}^{1-\alpha}
\end{equation}
\end{prop}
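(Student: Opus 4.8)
The plan is to deduce the three spheres inequality \eqref{ttc} from the vectorial Carleman estimate of Corollary~\ref{carv} applied to $U = \nabla u$, exactly as one does in the standard quantitative-uniqueness argument (Donnelly--Fefferman, Kukavica, etc.), the only novelty being that $\nabla u$ is vector valued and satisfies $\Delta(\nabla u) + \lambda \nabla u = [\Delta,\nabla]u$, which on a Riemannian manifold is a first-order error term controlled by the curvature: $|\Delta \nabla u + \lambda \nabla u| \le C(|\nabla u| + |\nabla^2 u|)$, or rather $|(\Delta+\lambda)\nabla u|\le C|\nabla u|$ after using the equation to trade $\nabla^2 u$ for lower order terms. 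Actually the cleanest route is to note that $V := (u,\nabla u)$ satisfies a system $|\Delta V + \lambda V|\le C|V|$ with $C$ depending only on $M$, so Corollary~\ref{carv} applies to $V$ with the perturbation absorbed for $\tau$ large (this is why the hypothesis is $\tau\ge C_1\sqrt\lambda$ with a possibly larger $C_1$). I would first record this commutator estimate as the starting point.

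Next I would introduce a smooth cutoff $\chi$ which is $1$ on $B_R\setminus B_{R/2}$, supported in $B_{2R}\setminus B_{R/4}$, with $|\nabla^k\chi|\le C R^{-k}$, and apply \eqref{Sivv2} (the version with the $\delta$-term, $\delta\sim R/4$) to $U=\chi\nabla u$. The right-hand side produces $\tau^{3/2}\|r^{\varepsilon/2}e^{-\tau\phi}\chi\nabla u\|$ which dominates (a constant times) $\|e^{-\tau\phi}\nabla u\|_{B_R\setminus B_{R/2}}$; the left-hand side is $\|r^2 e^{-\tau\phi}(\Delta+\lambda)(\chi\nabla u)\|$, and expanding $(\Delta+\lambda)(\chi\nabla u)=\chi(\Delta+\lambda)\nabla u + 2\nabla\chi\cdot\nabla(\nabla u)+(\Delta\chi)\nabla u$ splits into: the genuine error $\chi[\Delta,\nabla]u$, bounded by $C\|e^{-\tau\phi}\chi\nabla u\|$ hence absorbable into the right side for $\tau\gtrsim\sqrt\lambda$ (here one also uses the $\tau\delta\|r^{-1}e^{-\tau\phi}U\|$ term to control second derivatives via elliptic/Caccioppoli estimates, or one works with the system $V$ to avoid $\nabla^2 u$ altogether); and the commutator-with-cutoff terms, supported on the two shells $B_{R/2}\setminus B_{R/4}$ (inner) and $B_{2R}\setminus B_R$ (outer), where one bounds $e^{-\tau\phi}$ from above by its value on the appropriate sphere.

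Then comes the arithmetic that yields the interpolation exponent. On the inner shell $r\in(R/4,R/2)$ and on the outer shell $r\in(R,2R)$ the weight $e^{-\tau\phi}=e^{\tau f(\ln r)}$ is monotone in $r$ (since $f'\approx 1>0$), so one gets, schematically,
\begin{equation*}
e^{-\tau\phi(R)}\|\nabla u\|_{B_R\setminus B_{R/2}} \;\le\; C\Big(e^{-\tau\phi(R/4)} R^{-2}\|\nabla u\|_{B_{R/2}} + e^{-\tau\phi(2R)} R^{-2}\|\nabla u\|_{B_{2R}}\Big),
\end{equation*}
where I also used $r^2$-type factors and Caccioppoli to pass from $\nabla(\nabla u)$ back to $\nabla u$ on slightly larger balls. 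Writing $A=\|\nabla u\|_{B_{R/2}}$, $B=\|\nabla u\|_{B_{2R}}$, $D=\|\nabla u\|_{B_R}$ and denoting the gaps $\phi(R)-\phi(R/4)=:a>0$, $\phi(2R)-\phi(R)=:b>0$ (both bounded above and below by absolute constants, uniformly in $R<R_0$, because $f'$ is pinched between $1-\varepsilon e^{\varepsilon T_0}$ and $1$), this reads $D\le C(e^{-\tau a}A + e^{\tau b}B)$ for all $\tau\ge C_1\sqrt\lambda$. One then optimizes in $\tau$: if the unconstrained optimum $\tau_*=\frac{1}{a+b}\log\frac{aA}{bB}$ exceeds $C_1\sqrt\lambda$ one plugs it in and obtains $D\le C A^{\alpha}B^{1-\alpha}$ with $\alpha=\frac{b}{a+b}\in(0,1)$ depending only on $M$; if $\tau_*<C_1\sqrt\lambda$ one instead takes $\tau=C_1\sqrt\lambda$, which forces $A\le e^{c\sqrt\lambda}B$ and then the trivial bound $D\le B$ gives $D\le e^{c\sqrt\lambda}A^\alpha B^{1-\alpha}$. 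Either way \eqref{ttc} follows.

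The main obstacle I anticipate is not the optimization but the bookkeeping around the commutator $[\Delta,\nabla]u$ and the second derivatives: one must be careful that applying the Carleman estimate to a vector-valued function forces genuinely zeroth-and-first-order (not second-order) error terms, which is why it is best to set up the system for $V=(u,\nabla u)$ — then $(\Delta+\lambda)V$ is \emph{pointwise} controlled by $C|V|$ with $C=C(M)$, the whole error is absorbed by taking $C_1$ in $\tau\ge C_1\sqrt\lambda$ large, and no elliptic regularity step is needed inside the Carleman inequality; the passage from $\nabla(\nabla u)$ to $\nabla u$ on the shells (needed only for the cutoff-commutator terms, which live away from $x_0$) is then handled separately by a standard interior Caccioppoli inequality for $-\Delta u=\lambda u$, at the cost of enlarging the radii by harmless fixed factors and enlarging $R_0$ and $c$.
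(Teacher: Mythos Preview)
Your overall architecture --- Carleman estimate applied to a cut-off of $V=\nabla u$, cutoff-commutator terms pushed to the two shells and controlled by an elliptic gradient bound (the paper's \eqref{lm1}), then optimization in $\tau$ --- is exactly the paper's, and your final arithmetic producing the exponent $\alpha$ is correct.

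The gap is in your treatment of the interior error $[\Delta,\nabla]u$. In local coordinates $[\Delta,\partial_k] = (\partial_k g^{ij})\partial_i\partial_j + (\text{first order})$, so $(\Delta+\lambda)V = AV + B\cdot\nabla V$ with $A,B$ depending only on the metric; this is the paper's \eqref{S2}. Both of your proposed reductions to a zeroth-order right-hand side fail. The equation $\Delta u = -\lambda u$ fixes only the \emph{trace} of the Hessian, so you cannot ``trade $\nabla^2 u$ for lower order terms'' pointwise. Enlarging the system to $(u,\nabla u)$ does not help either: $(\Delta+\lambda)\partial_k u$ still contains $(\partial_k g^{ij})\partial_i\partial_j u$, a genuine first derivative of $V$, not a component of $(u,\nabla u)$. (Bochner does give a zeroth-order error, but only for the connection/Hodge Laplacian on $1$-forms; Corollary~\ref{carv} is the scalar $\Delta$ applied componentwise, and passing between the two reintroduces $\nabla V$.) A weighted Caccioppoli step is not available here either, since the weight $e^{\tau\phi}$ varies by a factor $e^{c\tau}$ over the scale such a step would need.

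The correct absorption --- and the reason the Carleman inequality carries a gradient term --- is direct. Since $r^2\le r^{1+\varepsilon/2}$ for $r\le R_0$ small and $|B|\le C(M)$,
\[
\big\|r^{2}e^{\tau\phi}\,\psi\, B\!\cdot\!\nabla V\big\| \;\le\; C\,\big\|r^{1+\varepsilon/2}e^{\tau\phi}\,\psi\nabla V\big\|,
\]
and this is swallowed by the term $\tau^{1/2}\|r^{1+\varepsilon/2}e^{\tau\phi}\nabla(\psi V)\|$ on the right of \eqref{Sivv2} once $\tau$ is large (after writing $\nabla(\psi V)=\psi\nabla V+\nabla\psi\otimes V$ and throwing the second piece onto the shells). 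This is exactly how the paper arrives at \eqref{cutof}; after that only the cutoff-commutator terms $\Delta\psi\,V+2\nabla\psi\cdot\nabla V$ survive, and the remainder of your argument goes through unchanged.
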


\begin{proof}
Let $x_0$ a point in $M$ and $(x_1,x_2,\cdots,x_n)$ local coordinates around $x_0$. Let $u$ be a solution 
to \eqref{fp} and define $V=(\frac{\partial u}{\partial x_1},\cdots,\frac{\partial u}{x_n}).$ 
Let  $R_0>0$ as in theorem  (\ref{S}) and $R$ such that $0<R<2R<R_0$. We still denote $r(x)$ the riemannian distance beetween $x$ and $x_0$. We also denote by $B_r$ the geodesic ball centered at $x_0$ of radius $r$. If $v$ is a  function defined in a neigborhood of $x_0$, we denote by  $\|v\|_R$ the $L^2$ norm of $v$ on $B_R$ and by $\|v\|_{R_1,R_2}$ the $L^2$ norm of $v$ on the set  
 $A_{R_1,R_2}:=\{x\in M ;\:R_1\leq r(x)\leq R_2\}$.  
Let $\psi\in\Cc^{\infty}_0(B_{2R})$, $0\leq\psi\leq1$, a radial function with the following properties :
\begin{itemize}
\item[$\bullet$] $\psi(x)=0$ if $r(x)<\frac{R}{4}$ or if $r(x)>\frac{5R}{3}$,
\item[$\bullet$] $\psi(x)=1$ if $\frac{R}{3}<r(x)<\frac{3R}{2}$,
\item[$\bullet$] $|\nabla\psi(x)|\leq \frac{C}{R}$, $|\nabla^{2}\psi(x) |\leq \frac{C}{R^2}$.
\end{itemize}

We recall that $\phi(x)=-\ln r(x)+r(x)^\varepsilon$.\vspace{0,4cm}\newline 
First  apply $\partial_k$ to each side of \eqref{fp} to get $$\Delta \partial_ku-[\Delta,\partial_k]u=\partial_ku$$ 
where $[\Delta,\partial_k]$ is a second order operator  with no zero order term and with coefficients depending only of $M$. 
The function $V=(\frac{\partial u}{\partial x_1},\cdots,\frac{\partial u}{x_n})$ is therefore a solution of the system  :

\begin{equation}\label{S2}\Delta V+\lambda V -AV-B\cdot\nabla V=0\end{equation} where $A$ and $B$ depend only on the metric $g$ of $M$ and its derivatives.
Now we apply the Carleman estimate  \eqref{Sivv2} to the function  $\psi V$ with $f(t)=t-e^{\varepsilon t}$. We get : 

\begin{eqnarray*}C\left\|r^2e^{\tau\phi}\left(\Delta(\psi V) +\lambda\psi V \right)\right\|&\geq& \tau^\frac{3}{2}\left\|r^{\frac{\varepsilon}{2}}e^{\tau\phi}\psi V\right\| \nonumber\\
+\ \tau R\left\|r^{-1}e^{\tau\phi}\psi V\right\|&+& \tau^{\frac{1}{2}}\left\|r^{1+\frac{\varepsilon}{2}}e^{\tau\phi}\nabla(\psi V)\right\|.
\end{eqnarray*}

\non Using that $V$ is a solution of \eqref{S2}, we have : 
 \begin{eqnarray*}
C\left\|r^2e^{\tau\phi}\left(\psi AV +\psi B\cdot\nabla V+2\nabla V\cdot\nabla\psi+\Delta\psi V\right)\right\|&\geq& \tau^\frac{3}{2}\left\|r^{\frac{\varepsilon}{2}}e^{\tau\phi}\psi V\right\| \nonumber\\
+\ \tau R\left\|r^{-1}e^{\tau\phi}\psi V\right\|+ \tau^{\frac{1}{2}}\left\|r^{1+\frac{\varepsilon}{2}}e^{\tau\phi}\nabla(\psi V)\right\|&& \nonumber
\end{eqnarray*}
Now  from triangular inequality we get
\begin{eqnarray*}
&C\left\|r^2e^{\tau\phi}\left(\Delta \psi V+2\nabla V\cdot\nabla\psi\right)\right\|\geq \tau^{\frac{3}{2}}\left\|r^{\frac{\varepsilon}{2}}e^{\tau\phi}\psi V\right\|-C\left\|r^2e^{\tau\phi}\psi AV\right\| &\\
&+\ \tau R\left\|r^{-1}e^{\tau\phi}\psi V\right\|+\tau^{\frac{1}{2}}\left\|r^{1+\frac{\varepsilon}{2}}e^{-\tau\phi}\nabla(\psi V)\right\|-C\left\|r^2e^{\tau\phi}\psi B\cdot\nabla V\right\| &
\end{eqnarray*}
and 
\begin{eqnarray*}\tau^{\frac{1}{2}}\left\|r^{1+\frac{\varepsilon}{2}}e^{\tau\phi}\nabla(\psi V)\right\|&\geq&\tau^{\frac{1}{2}}\left\|r^{1+\frac{\varepsilon}{2}}e^{\tau\phi}\psi\nabla V\right\|
-\tau^{\frac{1}{2}}\left\|r^{1+\frac{\varepsilon}{2}}e^{\tau\phi}\nabla\psi V\right\|  \\
&\geq&\tau^{\frac{1}{2}}\left\|r^{1+\frac{\varepsilon}{2}}e^{\tau\phi}\psi\nabla V\right\| -\tau^{\frac{1}{2}}\left\|r^{\frac{\varepsilon}{2}}e^{\tau\phi} V\right\| 
\end{eqnarray*}
Then for  $\tau$  great enough and for sufficient small $R_0$ ,
\begin{eqnarray}\label{cutof}C\left\|r^2e^{\tau\phi}\left(\Delta \psi V+2\nabla V\cdot\nabla\psi\right)\right\|&\geq& \tau^\frac{3}{2}\left\|r^{\frac{\varepsilon}{2}}e^{-\tau\phi}\psi V\right\| \nonumber\\
+\ \tau R\left\|r^{-1}e^{-\tau\phi}\psi V\right\|&+& \tau^{\frac{1}{2}}\left\|r^{1+\frac{\varepsilon}{2}}e^{-\tau\phi}\psi\nabla V\right\|.
\end{eqnarray}
In particular we have  :
\begin{equation*}C\left\|r^2e^{\tau\phi}\left(\Delta \psi V+2\nabla V\cdot\nabla\psi\right)\right\|\geq\tau\left\|e^{\tau\phi}\psi V\right\|\end{equation*}

\non Assume that $\tau \geq 1$, and use properties of $\psi$ to get  : 
\begin{eqnarray}\label{3c1}
\|e^{\tau\phi}V\|_{\frac{R}{3},\frac{3R}{2}} &\leq  &C\left(\|e^{\tau\phi}V\|_{\frac{R}{4},\frac{R}{3}}+\|e^{\tau\phi}V\|_{\frac{3R}{2},\frac{5R}{3}}\right) \nonumber\\
&+&C\left(R\|e^{\tau\phi}\nabla V\|_{\frac{R}{4},\frac{R}{3}}+R\|e^{\tau\phi}\nabla V\|_{\frac{3R}{2},\frac{5R}{3}}\right).\end{eqnarray}

\non Furthermore  as $\phi$ is radial and decreasing, 
$$\begin{array}{rcl}
\|e^{\tau\phi}V\|_{\frac{R}{3},\frac{3R}{2}}&\leq&
 C\left(e^{\tau\phi(\frac{R}{4})}\|V\|_{\frac{R}{4},\frac{R}{3}}+e^{\tau\phi(\frac{3R}{2})}\|V\|_{\frac{3R}{2},\frac{5R}{3}}\right)\\&+&C\left(Re^{\tau\phi(\frac{R}{4})}\|\nabla V\|_{\frac{R}{4},\frac{R}{3}}+Re^{\tau\phi(\frac{3R}{2})}\|\nabla V\|_{\frac{3R}{2},\frac{5R}{3}}\right).
\end{array}$$
Now we recall the following elliptic estimates : since $V$ satisfies \eqref{S2} then   hard to see that : \begin{equation}\label{lm1}\|\nabla V\|_{(1-a)r}\leq C\left(\frac{1}{(1-a)R}+\sqrt{\lambda}\right)\|V\|_{B_R}, \:\:\ \mathrm{for}\  \:0<a<1   \end{equation}
As $\|e^{\tau\phi}\nabla V\|_{\frac{R}{4},\frac{R}{3}}$ is bounded by $\|e^{\tau\phi}\nabla V\|_{\frac{R}{3}}$, using the formula \eqref{lm1} gives :

$$e^{\tau\phi(\frac{R}{4})}\|\nabla V\|_{\frac{R}{4},\frac{R}{3}}\leq C\left(\frac{1}{R}+\sqrt{\lambda}\right)e^{\tau\phi(\frac{R}{4})}\|V\|_{\frac{R}{2}},$$
Simiraly, we have also, 
$$e^{\tau\phi(\frac{3R}{2})}\|\nabla V\|_{\frac{3R}{2},\frac{5R}{3}}\leq C\left(\frac{1}{R}+\sqrt{\lambda}\right)e^{\tau\phi(\frac{3R}{2})}\|V\|_{2R}.$$
Using properties of  $\phi$ :
$$\|e^{\tau\phi}V\|_{\frac{R}{3},\frac{3R}{2}}\geq \|e^{\tau\phi}V\|_{\frac{R}{3},R}\geq e^{\tau\phi(R)}\|V\|_{\frac{R}{3},R}.$$
Using  (\ref{3c1}) one has :
 \begin{equation*}\label{ind}
\|V\|_{\frac{R}{3},R} \leq C\sqrt{\lambda}\left( e^{\tau(\phi(\frac{R}{4})-\phi(R))}\|V\|_{\frac{R}{2}}+e^{\tau(\phi(\frac{3R}{2})-\phi(R))}\|V\|_{2R}\right)
  \end{equation*}
  Let $A_R=\phi(\frac{R}{4})-\phi(R)$ and  $B_R=-(\phi(\frac{3R}{2})-\phi(R))$.
  Because of the properties of  $\phi$, we have $0<C_1\leq A_R\leq C_2$ and $0<C_1\leq B_R\leq C_2$ where $C_1$ and $C_2$ don't depend on $R$. 
We may assume that $C\sqrt{\lambda}\geq 2$. We can add $\|V\|_{\frac{R}{3}}$ to each member and bound it in the right hand side by  
$C\sqrt{\lambda}e^{\tau A}\|V\|_{\frac{R}{2}}$. Then replacing $C$ by $2C$ gives :
  
\begin{eqnarray}\label{3c2}
\|V\|_{R} &\leq &C\sqrt{\lambda} e^{\tau A}\|V\|_{\frac{R}{2}}+\|V\|_{\frac{R}{3}}+C_\lambda e^{-\tau B}\|V\|_{2R}\\
\|V\|_{R} &\leq & C\sqrt{\lambda}\left(e^{\tau A}\|V\|_{\frac{R}{2}}+e^{-\tau B}\|V\|_{2R}\right).
  \end{eqnarray} 
Now we want to find $\tau$ such that $$C\sqrt{\lambda} e^{-\tau B}\|V\|_{2R}\leq \frac{1}{2}\|V\|_{R}$$
wich is true for $\tau \geq -\frac{1}{B}\ln\left(\frac{1}{2C\sqrt{\lambda} }\frac{\|V\|_R}{\|V\|_{2R}}\right).$ Since $\tau$ must satisfy  $$\tau \geq C_1\sqrt{\lambda},$$
we choose
\begin{equation} \label{tau}
\tau = -\frac{1}{B}\ln\left(\frac{1}{2C\sqrt{\lambda}}\frac{\|V\|_R}{\|V\|_{2R}}\right)+C_1\sqrt{\lambda}.
\end{equation}

\noindent Inequality (\ref{3c2}) becomes 

\begin{equation*}
\|V\|_R\leq C\sqrt{\lambda} e^{C_1\sqrt{\lambda}}e^{\frac{-A}{B}\ln\left(\frac{1}{2C_\lambda}\frac{\|V\|_{R}}{\|V\|_{2R}}\right)}\|V\|_{\frac{R}{2}},
\end{equation*}


 \begin{equation*}
 \|V\|_R\leq  e^{\left(C_1\sqrt{\lambda}\right)\frac{B}{A+B}}\|V\|_{2R}^{\frac{A}{A+B}}\|V\|_{\frac{R}{2}}^{\frac{B}{B+A}}. \end{equation*}
 
\noindent  Finally define  $\alpha=\frac{A}{A+B}$ and replace $C_i$ by $C_i\frac{B}{A+B}$ to have
 \begin{equation*}
 \|V\|_R\leq e^{C_5\sqrt{\lambda}}\|V\|_{2R}^{\alpha}\|V\|^{1-\alpha}_{\frac{R}{2}}.
 \end{equation*} 

\end{proof}

From now on we assume that $M$ is compact. Thus we can derive from three sphere theorem above uniform doubling estimates on solutions.

\begin{thm}[doubling estimates]\label{dor}

There exist two non-negative constants  $R_0$, $C_1$  depending only on  $M$ such that : if $u$ is a solution to   \eqref{fp} on $M$
then  $\forall x_0\in M,\forall r>0,$ 
\begin{equation}\label{do}\|\nabla u\|_{B_{2r}(x_0)}\leq e^{C_1\sqrt{\lambda}}\|\nabla u\|_{B_r(x_0)}.
\end{equation}
\end{thm}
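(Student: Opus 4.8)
The plan is to run the standard quantitative‑uniqueness scheme: globalise the three spheres inequality \eqref{ttc} using the compactness and connectedness of $M$, and then apply the vector Carleman estimate \eqref{Sivv2} of Corollary \ref{carv} scale by scale. By homogeneity normalise $\|\nabla u\|_{L^2(M)}=1$, and write $\omega_x(\rho)=\|\nabla u\|_{B_\rho(x)}$, which is non‑decreasing in $\rho$.

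First I would reduce to small $r$: assuming the globalisation below, for $r$ bounded below by a fixed multiple of $R_0$ one has $\omega_{x_0}(2r)\leq\|\nabla u\|_{L^2(M)}=1\leq e^{C\sqrt\lambda}\,\omega_{x_0}(\rho)\leq e^{C\sqrt\lambda}\,\omega_{x_0}(r)$, so it suffices to prove \eqref{do} for, say, $r<R_0/8$. For the globalisation (a Harnack‑chain argument), fix a small $\rho=\rho(M)$, cover $M$ by finitely many balls $B_\rho(y_1),\dots,B_\rho(y_L)$, and join an arbitrary $x_0$ to each $y_i$ by a chain $x_0=z_0,\dots,z_N=y_i$ with $N=N(M)$ and with consecutive centres close enough that $B_{\rho/2}(z_{j+1})\subset B_\rho(z_j)$. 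Applying \eqref{ttc} at $z_j$ with $R=\rho$ and using the a priori bound $\omega_{z_j}(2\rho)\leq1$ gives $\omega_{z_j}(\rho)\leq e^{c\sqrt\lambda}\,\omega_{z_j}(\rho/2)^{\alpha}\leq e^{c\sqrt\lambda}\,\omega_{z_{j-1}}(\rho)^{\alpha}$, and iterating along the chain yields $\omega_{y_i}(\rho)\leq e^{C\sqrt\lambda}\,\omega_{x_0}(\rho)^{\alpha^{N}}$. Since $\sum_i\omega_{y_i}(\rho)^2\geq\|\nabla u\|_{L^2(M)}^2=1$, some $y_i$ has $\omega_{y_i}(\rho)\geq L^{-1/2}$, whence
$$\|\nabla u\|_{L^2(M)}\ \leq\ e^{C\sqrt\lambda}\,\omega_{x_0}(\rho)\qquad\text{for all }x_0\in M .$$

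Next, for fixed $x_0$ and $r<R_0/8$, I would apply \eqref{Sivv2} to $U=\psi V$, where $V=\nabla u$ solves the system \eqref{S2} and $\psi$ is a radial cut‑off equal to $1$ on $\{\delta\leq r(x)\leq r\}$, supported in $\{\delta/2\leq r(x)\leq 2r\}$, with $|\nabla\psi|,|\nabla^{2}\psi|$ of size $\delta^{-1},\delta^{-2}$ near the inner edge and $r^{-1},r^{-2}$ near the outer edge; here $\delta>0$ is an auxiliary radius. Writing $\Delta(\psi V)+\lambda\psi V=\psi(AV+B\!\cdot\!\nabla V)+[\Delta,\psi]V$ and using $\tau\geq C_1\sqrt\lambda$, the zeroth‑ and first‑order terms $\psi(AV+B\!\cdot\!\nabla V)$, whose coefficients depend only on $M$, are absorbed into the right‑hand side of \eqref{Sivv2}: for $r<1$ their $e^{\tau\phi}$‑weighted norms are controlled by $\tau^{3/2}\|r^{\varepsilon/2}e^{\tau\phi}\psi V\|$ and $\tau^{1/2}\|r^{1+\varepsilon/2}e^{\tau\phi}\psi\nabla V\|$. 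The commutator $[\Delta,\psi]V$ splits into an outer piece supported in $\{r\leq r(x)\leq2r\}$, which by the interior elliptic estimate \eqref{lm1} is $\lesssim\sqrt\lambda\,e^{\tau\phi(r)}\,\omega_{x_0}(2r)$, and an inner piece supported near $r(x)=\delta$, which is absorbed by the term $\tau\delta\|r^{-1}e^{\tau\phi}U\|$ occurring in \eqref{Sivv2} after bounding $\omega_{x_0}(c\delta)$ crudely by $\|\nabla u\|_{L^2(M)}$; since $V$ vanishes to finite order at $x_0$, one may then let $\delta\to0$ so that the inner piece disappears. Balancing the surviving outer term against the lower bound $\tau^{3/2}\|r^{\varepsilon/2}e^{\tau\phi}\psi V\|\gtrsim\tau^{3/2}e^{\tau\phi(r)}\,\omega_{x_0}(r)$ coming from the region $\{\psi=1\}$, optimising $\tau$ in the range $\tau\gtrsim\sqrt\lambda$, and feeding in the globalised inequality above, one should arrive at $\omega_{x_0}(2r)\leq e^{C_1\sqrt\lambda}\,\omega_{x_0}(r)$, which is \eqref{do}.

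The step I expect to be the main obstacle is exactly this last one: passing from a single‑scale Carleman estimate to a doubling bound with a constant uniform in $r$, $x_0$ and $\lambda$. Iterating the three spheres inequality \eqref{ttc} dyadically down to scale $r$ is not enough, because the constant it produces degenerates as $r\to0$; one genuinely needs the quantitative form of \eqref{Sivv2}, and in particular its refined term $\tau\delta\|r^{-1}e^{\tau\phi}U\|$, to handle the contribution from a neighbourhood of $x_0$ — where $\nabla u$ may itself vanish — without losing uniformity in the scale. A secondary point, harmless here, is that the coefficients $A,B$ of the lower‑order terms in \eqref{S2} are bounded by a constant depending only on $M$ (they do not grow with $\lambda$), so the threshold $\tau\geq C_1\sqrt\lambda$ already dominates them and they do not spoil the exponential rate $e^{C_1\sqrt\lambda}$.
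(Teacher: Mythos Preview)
Your globalisation step (the Harnack--chain argument giving $\|\nabla u\|_{L^2(M)}\leq e^{C\sqrt\lambda}\omega_{x_0}(\rho)$) is correct and matches Proposition~\ref{cor1}. The gap is in the second step, and it is structural: the geometry of your cut--off is wrong, and the limit $\delta\to0$ does not do what you want.

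With the outer edge of $\psi$ at the variable small scale $r$ and the inner edge at $\delta\to0$, two things fail. First, the inner commutator has size of order $e^{\tau\phi(\delta/2)}\|V\|_{B_\delta}\sim\delta^{-\tau}\|V\|_{B_\delta}$; this does \emph{not} tend to $0$ as $\delta\to0$ unless $V$ vanishes at $x_0$ to order exceeding $\tau\sim\sqrt\lambda$, which is false in general (typically $V(x_0)\neq0$). Meanwhile the refined Carleman term $\tau\delta\|r^{-1}e^{\tau\phi}U\|$ you invoke for absorption carries an explicit prefactor $\delta$ and becomes useless as $\delta\to0$. Second, even if the inner piece vanished, the remaining inequality $C\|r^2e^{\tau\phi}[\Delta,\psi]V\|_{\mathrm{outer}}\geq\tau^{3/2}\|r^{\varepsilon/2}e^{\tau\phi}\psi V\|$ only yields a trivial bound in the wrong direction: both sides carry the same weight $e^{\tau\phi(r)}$, so no choice of $\tau$ separates them, and your claimed lower bound $\tau^{3/2}e^{\tau\phi(r)}\omega_{x_0}(r)$ for the right side tacitly assumes $\|V\|_{[r/2,r]}\gtrsim\|V\|_{B_r}$, which is itself a doubling statement.

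The paper's scheme places the outer edge at a \emph{fixed} macroscopic radius $R\sim R_0$ and the inner edge at $\delta\sim r$. The Carleman lower bound then contains two pieces: an outer one on $[R/8,R/4]$ and an inner one on $[5\delta/4,3\delta]$ (this is where the term $\tau\delta\|r^{-1}e^{\tau\phi}\psi V\|$ is actually used, with $\delta$ fixed). One chooses $\tau$ so that the outer commutator is absorbed by the outer good piece; since both live at the fixed scale $R$ and the annular lower bound of Corollary~\ref{cor2} gives $\|V\|_{[R/8,R/4]}\geq e^{-C_R\sqrt\lambda}\|V\|_{L^2(M)}$, the required $\tau$ is of order $\sqrt\lambda$, \emph{independent of $\delta$}. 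What remains is the inner comparison $e^{\tau\phi(3\delta)}\|V\|_{3\delta}\lesssim e^{\tau\phi(\delta)}\|V\|_{3\delta/2}$, and since $\phi(\delta)-\phi(3\delta)=\ln3+O(\delta^\varepsilon)$ is bounded uniformly in $\delta$, this \emph{is} the doubling inequality with constant $e^{C\tau}=e^{C\sqrt\lambda}$. In short: the doubling comes from the inner edge, and the outer edge must be kept at a fixed scale so that the choice of $\tau$ does not depend on $r$.
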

\vspace{0,5cm}
\begin{Rq}
Using standard elliptic theory to bound the $L^\infty$  norm of $|V|$ by a multiple of its $L^2$ norm gives for $\delta>0$ : $$\|V\|_{L^\infty(B_\delta(x_0))}\geq (C_1\lambda+C_2)^{\frac{n}{2}}\delta^{-n/2}\|u\|_{2\delta}$$ Then  one can see that the doubling estimate is still true with the $L^\infty$ norm 
\begin{equation}
\|V\|_{L^\infty(B_{2r}(x_0))}\leq e^{C\sqrt{\lambda}}\|V\|_{L^\infty(B_r(x_0))} 
\end{equation}
\end{Rq}
\noindent  To proove the theorem \ref{dor} we need the following 
 \begin{prop}\label{cor1}
$\forall R>0,\  \exists\ C_R>0,\ \forall x_0\in M :$
$$\|\nabla u\|_{B_R(x_0)}\geq e^{-C_R\sqrt{\lambda}}\|\nabla u\|_{L^2(M)} .$$
\end{prop}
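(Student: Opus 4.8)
The plan is to establish Proposition \ref{cor1} by a standard compactness/propagation-of-smallness argument, using the three spheres theorem \eqref{ttc} to propagate a lower bound on $\|\nabla u\|$ from one small ball to all of $M$. First I would fix $R>0$ (which we may assume small, say $2R<R_0$) and a point $x_0\in M$. Since $u$ is non-constant, $\nabla u$ is not identically zero, so $\|\nabla u\|_{L^2(M)}>0$; the content is the \emph{uniformity} of the constant $C_R$ over all $x_0$. By compactness of $M$ we can cover $M$ by finitely many balls $B_{R/2}(y_1),\dots,B_{R/2}(y_N)$, and there is a point $y_j$ with $\|\nabla u\|_{B_{R/2}(y_j)}^2 \geq \frac{1}{N}\|\nabla u\|_{L^2(M)}^2$, hence $\|\nabla u\|_{B_{R/2}(y_j)}\geq N^{-1/2}\|\nabla u\|_{L^2(M)}$.

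The next step is to connect $x_0$ to this good ball $B_{R/2}(y_j)$ by a chain of overlapping balls of controlled length. Since $M$ is compact and connected, there is an integer $L=L(M,R)$ such that any two points can be joined by a chain $x_0=z_0,z_1,\dots,z_L=y_j$ with $d(z_{i},z_{i+1})<R/2$ (so that $B_{R/2}(z_{i+1})\subset B_R(z_i)$ and $B_{2R}(z_i)\subset B_{4R}(z_{i+1})$ etc., up to adjusting radii). Applying the three spheres inequality \eqref{ttc} centered at each $z_i$ gives, schematically,
\begin{equation*}
\|\nabla u\|_{B_R(z_i)} \leq e^{c\sqrt{\lambda}}\,\|\nabla u\|_{B_{R/2}(z_i)}^{\alpha}\,\|\nabla u\|_{B_{2R}(z_i)}^{1-\alpha},
\end{equation*}
and since $\|\nabla u\|_{B_{2R}(z_i)}\leq \|\nabla u\|_{L^2(M)}$, one gets a recursive bound of the form $\|\nabla u\|_{B_{R/2}(z_i)} \geq \big(e^{-c\sqrt\lambda}\|\nabla u\|_{B_{R/2}(z_{i+1})}\big)^{1/\alpha}\|\nabla u\|_{L^2(M)}^{1-1/\alpha}$, up to replacing $R/2$ balls by $R$ balls along the way (which only costs more harmless factors, since $B_{R/2}\subset B_R$ and monotonicity goes the wrong way — this is the technical point to handle carefully). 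Iterating $L$ times from the good ball back to $x_0$, and using that $\|\nabla u\|_{L^2(M)}\geq \|\nabla u\|_{B_\bullet}$ everywhere to absorb all the powers of $\|\nabla u\|_{L^2(M)}$, yields $\|\nabla u\|_{B_R(x_0)}\geq e^{-C_R\sqrt{\lambda}}\|\nabla u\|_{L^2(M)}$ with $C_R$ depending only on $M$ and $R$ (through $c$, $\alpha$, $N$, $L$).

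The main obstacle is bookkeeping the radii and exponents correctly in the chain argument: the three spheres inequality bounds the middle-radius norm by the small- and large-radius norms, so to propagate a \emph{lower} bound from $B_{R/2}(y_j)$ one must arrange the chain so that the small ball of $z_i$ contains (or is contained appropriately relative to) a ball around $z_{i+1}$, and keep track of the multiplicative constants $e^{c\sqrt\lambda}$ and the exponents $\alpha^k$ which stay bounded away from $0$ since $L$ is finite and fixed. Concretely I would phrase the iteration with a single radius scale: for points at distance $<R/4$, $B_{R/4}(z_{i+1})\subset B_{R/2}(z_i)$ and $B_{2R}(z_i)\subset B_M$, so \eqref{ttc} applied at $z_i$ with radius $R/2$ gives $\|\nabla u\|_{B_{R/2}(z_i)}\geq e^{-c\sqrt\lambda/\alpha}\|\nabla u\|_{B_{R/4}(z_{i+1})}^{1/\alpha}\|\nabla u\|_{L^2(M)}^{1-1/\alpha}\geq e^{-c\sqrt\lambda/\alpha}\|\nabla u\|_{B_{R/4}(z_{i+1})}^{1/\alpha}$ (since $\|\nabla u\|_{L^2(M)}\geq \|\nabla u\|_{B_{R/4}(z_{i+1})}$ and $1-1/\alpha<0$ makes this inequality hold in the claimed direction after normalizing $\|\nabla u\|_{L^2(M)}=1$); chaining these and undoing the normalization gives the result. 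Everything else — the covering, the existence of a good ball, the finiteness of the chain length — is immediate from compactness and connectedness.
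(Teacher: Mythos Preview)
Your proposal is correct and is essentially the same argument as the paper's: normalize $\|\nabla u\|_{L^2(M)}=1$, locate a ``good'' ball on which $\|\nabla u\|$ is bounded below by a constant depending only on $(M,R)$ (the paper picks the maximizing center $\bar x$, you use pigeonhole on a finite cover --- equivalent), then propagate this lower bound to $B_R(x_0)$ along a finite chain of overlapping balls using the three spheres inequality \eqref{ttc}, with chain length $m=m(M,R)$ bounded by compactness/connectedness. The only slip is in your displayed iteration step, where the small ball should be on the left: from \eqref{ttc} at $z_i$ (balls of radii $R/2,R,2R$) and $\|\nabla u\|_{B_{2R}(z_i)}\le 1$ one gets $\|\nabla u\|_{B_{R/2}(z_i)}\ge e^{-c\sqrt\lambda/\alpha}\|\nabla u\|_{B_R(z_i)}^{1/\alpha}\ge e^{-c\sqrt\lambda/\alpha}\|\nabla u\|_{B_{R/2}(z_{i+1})}^{1/\alpha}$, which is exactly the paper's step \eqref{cop}; your version with $B_{R/2}(z_i)$ on the left and $B_{R/4}(z_{i+1})$ on the right reverses the roles, but this is a harmless bookkeeping slip and the argument goes through once the radii are aligned.
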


\begin{proof}
Let $R>0$ and assume without loss of generality that $R<R_0$ whith $R_0$  such that three spheres theorem (theorem \ref{ttc}) is valid. Up to multiplication by a constant, we can assume that $\|\nabla u\|_{L^2(M)}=1$. We denote by $\bar{x}$ a point in $M$ such that  $\|\nabla u\|_{B_R(\bar{x})}=\sup_{x\in M}  \|\nabla u\|_{B_R(x)}$.
 This implies that one has  $\|\nabla u\|_{B_{R(\bar{x})}}\geq D_R$, where $D_R$ depend only on $M$ and $R$.
One has  from proposition (\ref{ttc})  at an arbitrary point $x$ of $M$ : 

\begin{equation}\label{cop}\|\nabla u\|_{B_{R/2}(x)}\geq e^{-c\sqrt{\lambda}}\|\nabla u\|^{\frac{1}{\alpha}}_{B_R(x)}\end{equation}
Let   $\gamma$ be a geodesic curve  beetween $x$ and $\bar{x}$ and define  $x_0=x,\ x_1,\cdots,x_m=\bar{x}$ such that 
 $x_i\in\gamma$ and
 $B_{\frac{R}{2}}(x_{i+1})\subset B_R(x_i),\ \forall i=1,\cdots,m$. The constant $m$  depends only on $\mathrm{diam}(M)$ and  $R$. Then the properties of $(x_i)_{1\leq i\leq m}$ and inequality \eqref{cop} give for all $i$, $1\leq i\leq m$ :
\begin{equation}
\|\nabla u\|_{B_{R/2}(x_i)}\geq e^{-c_i\sqrt{\lambda}}\|\nabla u\|^{\frac{1}{\alpha}}_{B_{R/2}(x_{i+1})}.
\end{equation}
The result follows by induction and the fact that $\|\nabla u\|_{B_R(\bar{x})}\geq D_R$.

\end{proof}
\begin{cor}\label{cor2}
For all $R>0$, there exists a positive constant $C_R$ depending only  on $M$ and $R$ such that at any point $x_0$ in $M$ one has
\begin{equation*}
\|\nabla u\|_{\frac{R}{4},\frac{R}{8}}\geq e^{-C_R\sqrt{\lambda}}\|\nabla u\|_{L^2(M)}
\end{equation*}
\end{cor}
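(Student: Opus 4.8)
The plan is to derive this directly from Proposition \ref{cor1}, by relocating the ball lower bound provided there so that it sits inside the prescribed annulus around $x_0$. We may assume $R<R_0$, shrinking $R_0$ if necessary so that for every $x_0\in M$ the geodesic sphere $\{x\in M:d(x,x_0)=\rho\}$ is non-empty for each $0<\rho\le R/4$; this is possible since $M$ is compact and connected, and only small values of $R$ will actually be used in the sequel.

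First I would fix $x_0\in M$ and pick a point $x_1\in M$ with $d(x_1,x_0)=3R/16$. By the triangle inequality, every $y\in B_{R/16}(x_1)$ satisfies $R/8\le d(y,x_0)\le R/4$, so $B_{R/16}(x_1)$ is contained in the annulus $A_{R/8,R/4}(x_0)=\{x\in M:R/8\le d(x,x_0)\le R/4\}$, and therefore $\|\nabla u\|_{\frac{R}{4},\frac{R}{8}}\ge\|\nabla u\|_{B_{R/16}(x_1)}$. Next I would apply Proposition \ref{cor1} with radius $R/16$ and base point $x_1$: this yields a constant $C_R>0$ depending only on $M$ and $R$ such that $\|\nabla u\|_{B_{R/16}(x_1)}\ge e^{-C_R\sqrt{\lambda}}\|\nabla u\|_{L^2(M)}$. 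Combining the two inequalities gives the stated bound.

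I do not expect any genuine obstacle here: the result is an immediate corollary of Proposition \ref{cor1}, and the only points that require (minimal) attention are the elementary inclusion $B_{R/16}(x_1)\subset A_{R/8,R/4}(x_0)$ and the existence of a point $x_1$ on the geodesic sphere of radius $3R/16$ about $x_0$, both of which are clear once $R$ is taken small. The reason for phrasing the lower bound over an annulus rather than a ball is that this is precisely the form needed to feed the three spheres theorem into the iteration that will establish the doubling estimate of Theorem \ref{dor}: in \eqref{3c1} one controls $\|\nabla u\|$ on an inner annulus by its values on neighboring annuli, and combined with Corollary \ref{cor2} this will let one compare $\|\nabla u\|_{B_r}$ with $\|\nabla u\|_{B_{2r}}$ at the scale where the exponent $\sqrt{\lambda}$ appears.
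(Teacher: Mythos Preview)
Your proof is correct and follows essentially the same approach as the paper: choose a point $x_1$ in the annulus $A_{R/8,R/4}(x_0)$ so that $B_{R/16}(x_1)\subset A_{R/8,R/4}(x_0)$, and then apply Proposition~\ref{cor1} at $x_1$ with radius $R/16$. The only difference is cosmetic: you specify $x_1$ at distance $3R/16$ from $x_0$ and verify the inclusion via the triangle inequality, whereas the paper simply invokes geodesic completeness to assert the existence of such an $x_1$.
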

\begin{proof} 
Let  $R<R_0$ where $R_0$ is such that the three spheres theorem is valid, note that $R_0\leq \mathrm{diam}(M)$. Recall that we  defined locally near a point $x_0$ : $A_{r_1,r_2}:=\{x\in M ; r_1\leq d(x,x_0)\leq r_2)\}$. As $M$ is geodesically complete, there exists a point $x_1$ in  $A_{\frac{R}{8},\frac{R}{4}}$ 
such that $B_{x_1}(\frac{R}{16})\subset A_{\frac{R}{8},\frac{R}{4}}$. From proposition \ref{cor1} one has 
 $\|\nabla u\|_{B_{\frac{R}{16}}(x_1)}\geq e^{-C_R\sqrt{\lambda}}\|\nabla u\|_{L^2(M)}$
 wich gives the result. 
\end{proof}
\begin{proof}[Proof of theorem \ref{dor}]

We proceed like in the proof of three spheres theorem except for the fact that now we want the first ball to become arbitrary small in front of the others.
 Let  $R=\frac{R_0}{4}$ where $R_0$ is such that the three spheres theorems is valid,  let  $\delta$ such that  $0<\delta<2\delta<3\delta<\frac{R}{8}<\frac{R}{2}<R$,
and  define  a smooth radial function $\psi$, with $0\leq\psi\leq1$  as follows: 
 
\begin{itemize}
\item[$\bullet$] $\psi(x)=0$ if $r(x)<\delta$ or if $r(x)>R$,
\item[$\bullet$] $\psi(x)=1$ if $\frac{5\delta}{4}<r(x)<\frac{R}{2}$,
\item[$\bullet$] $|\nabla\psi(x)|\leq\frac{C}{\delta}$ if $r(x)\in[\delta,\frac{5\delta}{4}]$ and  $|\nabla\psi(x)|\leq C$ if $r(x)\in[\frac{R}{2},R]$,
\item[$\bullet$]  $|\nabla^2\psi(x)|\leq\frac{C}{\delta^2}$ if $r(x)\in[\delta,\frac{5\delta}{4}]$ and $|\nabla^2\psi(x)|\leq C$ if $r(x)\in[\frac{R}{2},R]$.
\end{itemize}
Keeping appropriates terms in \eqref{cutof} gives :  

\begin{equation*}
\begin{array}{rcl}
\|r^{\frac{\varepsilon}{2}}e^{\tau\phi}\psi V\|+ \tau\delta\|r^{-1}e^{\tau\phi}\psi V\|&\leq &C\left(\|r^2e^{\tau\phi}\nabla V\cdot\nabla\psi\|+\|r^2e^{\tau\phi}\Delta\psi V\|\right) \vspace{0,15cm}\\
&\leq &\frac{C}{\delta}\|r^2e^{\tau\phi}\nabla V\|_{\delta,\frac{5\delta}{4}}+C\|e^{\tau\phi}\nabla V\|_{\frac{R}{2},R}\vspace{0,15cm}\\
&+&\frac{C}{\delta^2}\|r^2e^{\tau\phi}V\|_{\delta,\frac{5\delta}{4}}+C\|e^{\tau\phi} V\|_{\frac{R}{2},R}
\end{array}
\end{equation*}
Using properties of $\psi$ we have,  

\begin{equation*}
\begin{array}{rcl}
&&\|r^{\frac{\varepsilon}{2}}e^{\tau\phi}V\|_{\frac{5\delta}{4},3\delta}+\|r^{\frac{\varepsilon}{2}}e^{\tau\phi}V\|_{\frac{R}{8},\frac{R}{4}}\vspace{0,15cm}\\ &+&\tau\delta\|r^{-1}e^{\tau\phi}V\|_{\frac{5\delta}{4},3\delta}+\tau\delta\|r^{-1}e^{\tau\phi}V\|_{\frac{R}{8},\frac{R}{4}}\vspace{0,15cm}\\
&\leq& \frac{C}{\delta}\|r^2e^{\tau\phi}\nabla V\|_{\delta,\frac{5\delta}{4}}+C\|e^{\tau\phi}\nabla V\|_{\frac{R}{2},R}\vspace{0,15cm}\\&+&\frac{C}{\delta^2}\|r^2e^{\tau\phi} V\|_{\delta,\frac{5\delta}{4}}+C\|e^{\tau\phi}V\|_{\frac{R}{2},R}.
\end{array}
\end{equation*}

\noindent Now drop the first and last terms of the left hand side gives :

\begin{eqnarray*}
\|r^{\frac{\varepsilon}{2}}e^{\tau\phi} V\|_{\frac{R}{8},\frac{R}{4}}+\|e^{\tau\phi}V\|_{\frac{5\delta}{4},3\delta} \!\!&\leq  &\!\!C \left(\delta \|e^{\tau\phi}\nabla V\|_{\delta,\frac{5\delta}{4}}+\|e^{\tau\phi}\nabla V\|_{\frac{R}{2},R}\right)\\
&\!\!+\!\!&C\left(\|e^{\tau\phi} V\|_{\delta,\frac{5\delta}{4}}+\|e^{\tau\phi}V\|_{\frac{R}{2},R}\right)\nonumber
\end{eqnarray*}

 \noindent Now using  (\ref{lm1}) and properties of   $\phi$,

\begin{eqnarray*}
\|e^{\tau\phi} V\|_{\frac{R}{8},\frac{R}{4}}+\|e^{\tau\phi}V\|_{\frac{5\delta}{4},3\delta}&
\leq& C\sqrt{\lambda}\left(e^{\tau\phi(\delta)}\|\ V\|_{\frac{2\delta}{3},\frac{3\delta}{2}}+e^{\tau\phi(\frac{R}{5})}\|V\|_{\frac{R}{5},\frac{5R}{3}}\right)\nonumber \\
&+&C\sqrt{\lambda}\left(e^{\tau\phi(\delta)}\|V\|_{\delta,\frac{5\delta}{4}}+e^{\tau\phi(\frac{R}{5})}\|V\|_{\frac{R}{2},R}\right)
\end{eqnarray*}


\begin{equation*}
\|e^{\tau\phi} V\|_{\frac{R}{8},\frac{R}{4}}+\|e^{\tau\phi}V\|_{\frac{5\delta}{4},3\delta} \leq C\sqrt{\lambda}\left(e^{\tau\phi(\delta)}\|V\|_{\frac{3\delta}{2}}+e^{\tau\phi(\frac{R}{5})}\|V\|_{\frac{5R}{3}}\right)
\end{equation*}

\begin{equation*}
e^{\tau\phi(\frac{R}{4})} \|V\|_{\frac{R}{8},\frac{R}{4}}+e^{\tau\phi(3\delta)}\|V\|_{\frac{5\delta}{4},3\delta} \leq C\sqrt{\lambda}\left(e^{\tau\phi(\delta)}\|V\|_{\frac{3\delta}{2}}+e^{\tau\phi(\frac{R}{5})}\|V\|_{\frac{5R}{3}}\right)
\end{equation*}

\noindent Adding $e^{\tau\phi(3\delta)}\|V\|_{\frac{5\delta}{4}}$ to each side  
\begin{equation*}
e^{\tau\phi(\frac{R}{4})} \|V\|_{\frac{R}{8},\frac{R}{4}}+e^{\tau\phi(3\delta)}\|V\|_{3\delta} \leq C\sqrt{\lambda}\left(e^{\tau\phi(\delta)}\|V\|_{\frac{3\delta}{2}}+e^{\tau\phi(\frac{R}{5})}\|V\|_{\frac{5R}{3}}\right)
\end{equation*}
Now we want to choose $\tau$ such that  
$$C\sqrt{\lambda} e^{\tau\phi(\frac{R}{5})}\|V\|_{\frac{5R}{3}}\leq \frac{1}{2}e^{\tau\phi(\frac{R}{4})} \|V\|_{\frac{R}{8},\frac{R}{4}}$$
For the same reasons than before we choose 
  $$\tau=\frac{1}{\phi(\frac{R}{5})-\phi(\frac{R}{4})}\mathrm{ln}\left(\frac{1}{2C\sqrt{\lambda}}\frac{\|u\|_{\frac{R}{8},\frac{R}{4}}}{\|u\|_{\frac{5R}{3}}}\right)+C_1\sqrt{\lambda}$$
 Define $A=\left(\phi(\frac{R}{5})-\phi(\frac{R}{4})\right)^{-1}$; like before one can assume that $A$ is non-positive and independent of $R$. So,
  $$e^{\tau\phi(\frac{R}{4})}\|V\|_{\frac{R}{8},\frac{R}{4}}+e^{\tau\phi(3\delta)}\|V\|_{3\delta}\leq C\sqrt{\lambda}e^{\tau\phi(\delta)}\|V\|_{\frac{5\delta}{2}}$$
\noindent One can then  ignore the first term of the right hand side to get :
  $$e^{\tau\phi(3\delta)}\|V\|_{3\delta}\leq C\sqrt{\lambda}\  e^{A\mathrm{ln}\left(\frac{1}{2C\sqrt{\lambda}}\frac{\|V\|_{\frac{R}{8},\frac{R}{4}}}{\|V\|_{\frac{5R}{3}}}\right)+C_1\sqrt{\lambda}}\|V\|_{\frac{3\delta}{2}} $$
  $$\|V\|_{3\delta}\leq e^{C\sqrt{\lambda}}\left(\frac{\|V\|_{\frac{R}{8},\frac{R}{4}}}{\|V\|_{\frac{5R}{3}}}\right)^{A}\|V\|_{\frac{3\delta}{2}} $$
  Finally from corollary \ref{cor2}, define  $r=\frac{3\delta}{2}$ to have : 
  $$\|V\|_{2r}\leq e^{C\sqrt{\lambda}}\|V\|_{r} $$
   Thus, the theorem is proved for all $r\leq\frac{R_0}{16}$. Using proposition \ref{cor1} we have for $r\geq \frac{R_0}{16}$ : $$\|\nabla u\|_{B_{x_0}(r)}\geq\|\nabla u\|_{B_{x_0}(\frac{R_0}{16})}\geq e^{-C_0\sqrt{\lambda}}\|\nabla u\|_{L^2(M)}\geq e^{-C_1\sqrt{\lambda}} \|\nabla u\|_{B_{x_0}(2r)}$$
  
\end{proof}

\section{Critical set on analytic manifold}
%
From here we will follow the method of Donnelly and Fefferman \cite{DF1} to establish upper bound for the $(n-1)$-dimensionnal
 measure of critical set of eigenfunctions. So we also suppose that $M$ is analytic. 
Recall that $\Nc_u=\left\{x\in M : u(x)=0\right\}$ and $\Cc_u=\left\{x\in M : \nabla u(x)=0\right\}$. Define $B_{\C}(r)$ the complex ball : $$B_{\C}(r)=\left\{z\in\C^n: |z|<r\right\} $$and $B(r)$ the standard ball in $\R^n$ centred at $0$ of radius $r$.
 The main point to deduce from our doubling inequality an estimate on the Hausdorff measure of the  critical set is the following result of Donnelly and Fefferman :

\begin{thm}[\cite{DF1} p. 180]\label{df}
 Let $F$ be an holomorphic function on $B_\C(1)$ and suppose there exists  $\alpha>1$ such that  
$$\max_{B_\C(1)}|F|\leq e^{\alpha}\max_{B(\frac{1}{2})}{|F|},$$ then 
$$\Hc^{n-1}\left(\Nc_F\cap B\left(\frac{1}{4}\right)\right)\leq C\alpha.$$
where $\Nc_F$ is the zero set of $F$ in $\R^n$ and $C$ a constant depending only on the dimension.
\end{thm}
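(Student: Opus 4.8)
\textbf{Proof proposal for Theorem~\ref{df}.}

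The plan is to reduce the $n$-dimensional statement to the one-dimensional case by slicing, and to handle the one-dimensional case by Jensen's formula, which controls the number of zeros of a holomorphic function of one variable in terms of the growth of $\log|F|$. First I would set up the one-variable estimate: given a holomorphic function $g$ on the unit disc $D_\C(1)\subset\C$ with $\max_{D_\C(1)}|g|\leq e^\alpha \max_{D_\C(1/2)}|g|$ (for $\alpha>1$), Jensen's formula applied on the disc of radius, say, $3/4$ gives that the number of zeros of $g$ in $D_\C(1/2)$ is bounded by $C\alpha$, for a universal constant $C$; the hypothesis with the real half-ball only makes the bound stronger since $\max_{B(1/2)}|F|\leq \max_{D_\C(1/2)}|F|$ when we restrict to a complex line through the relevant point. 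The only subtlety here is that the doubling hypothesis compares a sup over a real set to a sup over a complex set; one first upgrades it, by the maximum principle and a standard Hadamard three-circles / Harnack-type argument, to a genuine complex-to-complex doubling inequality on slightly shrunken balls with a comparable constant.

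Next I would pass from the global function $F$ on $B_\C(1)$ to its restrictions to complex affine lines. Fix a direction $\omega\in S^{n-1}$ and, for each point $a$ in the orthogonal hyperplane, consider $F_{a,\omega}(\zeta)=F(a+\zeta\omega)$; wherever this restriction is not identically zero, the one-variable estimate bounds $\#\{\zeta : F_{a,\omega}(\zeta)=0,\ |a+\zeta\omega|<1/4\}$ by $C\alpha$, uniformly in $a$ (after an appropriate, direction-independent choice of where to center the discs, using that the doubling constant is uniform). This requires a short argument that the set of $a$ for which the restriction vanishes identically is negligible — indeed $F$ is not identically zero (else the doubling inequality fails for $\alpha$ finite), and the lines on which it vanishes identically form a set of positive codimension, hence of measure zero in the hyperplane. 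Integrating this pointwise bound over the hyperplane of base points $a$ then controls $\int \#(\Nc_F\cap \ell_{a,\omega}\cap B(1/4))\, da$.

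Finally I would invoke the integral-geometric (Crofton-type) formula for the $(n-1)$-dimensional Hausdorff measure of $\Nc_F$: since $F$ is holomorphic and not identically zero, $\Nc_F\cap\R^n$ is contained in a real-analytic set of dimension at most $n-1$, and its $\Hc^{n-1}$-measure is comparable, up to a dimensional constant, to the average over directions $\omega$ of $\int \#(\Nc_F\cap\ell_{a,\omega})\,da$ — i.e. the average number of points in which a line meets the set. Plugging in the uniform bound $C\alpha$ from the previous step yields $\Hc^{n-1}(\Nc_F\cap B(1/4))\leq C\alpha$ with $C$ depending only on $n$. The main obstacle is the first step: making the passage from the real-to-complex doubling hypothesis to a uniform complex-to-complex doubling bound on each slice with a constant independent of the slice and direction, since one must carefully track how shrinking balls and applying the maximum principle affects the constant in front of $\alpha$; once that uniformity is in hand, the slicing and Crofton steps are routine. (This is exactly the argument of Donnelly and Fefferman, \cite{DF1} p.~180, which we quote rather than reproduce in full detail.)
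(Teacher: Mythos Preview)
The paper does not supply a proof of this theorem at all: it is quoted verbatim from Donnelly--Fefferman \cite{DF1}, p.~180, and used as a black box in the proof of Theorem~\ref{principal}. Your sketch (Jensen's formula on complex lines to bound the zero count, followed by integral geometry/Crofton to pass from line-by-line bounds to $\Hc^{n-1}$) is precisely the strategy of the original Donnelly--Fefferman argument, so there is nothing to compare against here beyond noting that you have correctly identified and outlined the cited proof.
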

\non Let $u$ be a solution to \eqref{fp}. Fix $x_0$ in $M$ and consider $(x_1,\cdots,x_n)$  a chart around $x_0$. We assume that the chart contains the euclidean  ball $B_2$. We  define 
$$F(x)=\sum_{i=1}^n\left|\frac{\partial u}{\partial x_i}\right|^2,$$  The nodal set of $F$ is the critical set of $u$. 
One has :
\begin{prop}\label{cri} The function $F$  can be extended to an analytic function on $B_\C(1)$ and :  
\begin{equation*}\|F\|_{L^\infty(B_{\C}(1))}\leq e^{C\sqrt{\lambda}}\|F\|_{L^\infty(B(\frac{1}{2}))}\end{equation*}where $C$ is a constant depending only on $M$.
\end{prop}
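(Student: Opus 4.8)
The plan is to run Donnelly and Fefferman's holomorphic‑continuation argument not on $u$ but on its gradient. Fix the chart $(x_1,\dots,x_n)$ around $x_0$ containing $B(2)$ and set $V=(\partial u/\partial x_1,\dots,\partial u/\partial x_n)$, so that on the real ball $F=\sum_i|\partial u/\partial x_i|^2=|V|^2$ and hence $\|F\|_{L^\infty(B(1/2))}=\bigl(\sup_{B(1/2)}|V|\bigr)^2$. Recall from Section~3 that $V$ solves the elliptic system \eqref{S2}, i.e. $\Delta V+\lambda V=AV+B\cdot\nabla V$, whose coefficients $A,B$ together with those of $\Delta_g$ in the chart are real analytic and \emph{do not depend on $\lambda$}; the only $\lambda$ present sits in the zeroth–order term $\lambda V$. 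Since $(M,g)$ is analytic we may assume, after shrinking the atlas by a fixed amount (which only affects the constants below), that all these coefficients extend holomorphically to a fixed complex neighbourhood of $\overline{B(2)}\subset\C^n$, with bounds depending only on $M$.

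\emph{Step 1: holomorphic continuation of $V$ with exponential control.} The substantial point is the Cauchy estimate for eigenfunctions of \cite{DF1}, in its version for the system \eqref{S2}: there is $C=C(M)$ such that, for every multi–index $\alpha$ and every $x\in B(1)$,
\[
\bigl|\partial^{\alpha}V(x)\bigr|\ \le\ C^{|\alpha|+1}\bigl(|\alpha|+\sqrt{\lambda}\bigr)^{|\alpha|}\ \sup_{B(3/2)}|V| .
\]
This follows by iterating interior elliptic estimates for \eqref{S2}, using the analyticity of the coefficients to control the growth of their derivatives and exploiting that the obstruction term is $\lambda V=(\sqrt\lambda)^{2}V$, so that two differentiations absorb one power of $\lambda$ rather than producing one power of $\lambda$ per derivative. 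Summing the Taylor series of $V$ about points of $B(1)$ and using $|\alpha|^{|\alpha|}\le e^{|\alpha|}n^{|\alpha|}\alpha!$, the part of the series carrying the factor $|\alpha|^{|\alpha|}$ converges on a complex ball of a fixed radius $c_{0}=c_{0}(M)>0$, while the part carrying $\lambda^{|\alpha|/2}$ converges everywhere with growth $e^{C\sqrt\lambda|z|}$; rescaling the chart so that $c_{0}\ge 2$ (which changes only the constants, the real balls being corrected by the doubling estimate used in Step~2) yields a holomorphic extension $\tilde V=(\tilde V_{1},\dots,\tilde V_{n})$ of $V$ to $B_{\C}(1)$ with
\[
\sup_{B_{\C}(1)}|\tilde V|\ \le\ e^{C\sqrt\lambda}\ \sup_{B(3/2)}|V| .
\]

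\emph{Step 2: adjusting the real balls, and passage to $F$.} Applying twice the $L^\infty$ form of the doubling inequality of Theorem~\ref{dor} (the Remark following it), with radii $1$ and $\tfrac12$, gives $\sup_{B(3/2)}|V|\le\sup_{B(2)}|V|\le e^{C\sqrt\lambda}\sup_{B(1/2)}|V|$. Now set $\tilde F(z)=\sum_{i=1}^{n}\tilde V_{i}(z)^{2}$; as a finite sum of products of holomorphic functions it is holomorphic on $B_{\C}(1)$, and for real $z=x$ the values $\tilde V_{i}(x)=\partial u/\partial x_i(x)$ are real, so $\tilde F(x)=\sum_i|\partial u/\partial x_i(x)|^{2}=F(x)$. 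Thus $\tilde F$ is the desired analytic continuation of $F$, and (as needed afterwards) its real zero set is exactly $\Cc_u$. Since $|\tilde F(z)|\le\sum_i|\tilde V_i(z)|^{2}=|\tilde V(z)|^{2}$, combining the two estimates above with $\sup_{B(1/2)}|V|^{2}=\|F\|_{L^\infty(B(1/2))}$ gives
\[
\|F\|_{L^\infty(B_{\C}(1))}\ \le\ \sup_{B_{\C}(1)}|\tilde V|^{2}\ \le\ e^{C\sqrt\lambda}\ \sup_{B(1/2)}|V|^{2}\ =\ e^{C\sqrt\lambda}\,\|F\|_{L^\infty(B(1/2))},
\]
which is the assertion (with a new constant $C=C(M)$ absorbing the factor $2$ in the exponent).

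The only genuinely delicate step is Step~1: one must check that the radius of holomorphy of $V$ stays bounded below independently of $\lambda$ and that the growth rate on $B_\C(1)$ is $e^{C\sqrt\lambda}$ rather than, say, $e^{C\lambda}$ — this is precisely the eigenfunction Cauchy estimate, and it is where the power $\lambda=(\sqrt\lambda)^{2}$ of the zeroth–order term is essential. Everything else is routine interior elliptic theory together with the doubling inequality already established in Section~3. Granting the proposition, Theorem~\ref{principal} follows by applying Theorem~\ref{df} to $F$ with $\alpha=C\sqrt\lambda$ and covering $M$ by finitely many such charts.
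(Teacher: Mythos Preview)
Your argument is correct and follows the same strategy as the paper: holomorphically extend the gradient $V$ with growth $e^{C\sqrt\lambda}$ via Cauchy--type derivative bounds coming from analytic hypoellipticity, then use the $L^\infty$ doubling inequality on $|\nabla u|$ to bring the real comparison ball down to $B(1/2)$, and finally pass to $F=\sum_iV_i^2$.

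The only difference is presentational. The paper first rescales to balls of radius $\sim 1/\sqrt\lambda$ (Lemma~\ref{der}), obtaining $|D^\beta u(0)|\le \beta!\,C^{|\beta|}\sqrt\lambda^{\,|\beta|}\|\nabla u\|_{L^\infty(B(C_1/\sqrt\lambda))}$, which yields a holomorphic extension to a \emph{small} complex ball $B_\C(\rho/\sqrt\lambda)$ with a uniform constant \eqref{comp}; it then iterates this along a chain of $O(\sqrt\lambda)$ such balls to reach $B_\C(1)$, picking up the factor $C^{\sqrt\lambda}=e^{C\sqrt\lambda}$. You instead invoke the equivalent unit--scale estimate $|D^\alpha V|\le C^{|\alpha|+1}(|\alpha|+\sqrt\lambda)^{|\alpha|}\sup_{B(3/2)}|V|$ and sum the Taylor series in one shot, the $e^{C\sqrt\lambda}$ arising from the $\sqrt\lambda^{\,|\alpha|}$ part of the bound. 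These two devices are interchangeable (one is the rescaled form of the other), so the proofs are essentially the same; your version is slightly more direct, while the paper's iterated version makes the role of the wavelength scale $1/\sqrt\lambda$ more explicit.
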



\begin{lm}\label{der}Let $u$ be an eigenfunction of the laplace operator on  $B(1)$, for all multi-index $\beta$, with $|\beta|\geq1$ one has :   
\begin{equation}\label{der2}|D^\beta u(0)|\leq \beta ! C^{|\beta|}\sqrt{\lambda}^{|\beta |}\|\nabla u\|_{L^\infty\left(B(\frac{C_1}{\sqrt{\lambda}})\right)}
\end{equation}
where $C_1$ is a constant small enough.
\end{lm}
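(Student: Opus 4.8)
The plan is to derive the pointwise bound on derivatives from interior elliptic estimates combined with the Cauchy estimates for solutions of elliptic equations with analytic (here, constant-coefficient-like) leading part. First I would use the eigenfunction equation $-\Delta u=\lambda u$ to set up an a priori bound: on a ball of radius $\rho\sim C_1/\sqrt{\lambda}$, standard interior $L^\infty$ elliptic estimates for $\Delta u = -\lambda u$ give $\|u\|_{L^\infty(B_{\rho/2})}\le C(\|u\|_{L^2(B_\rho)} + \rho^2\lambda\|u\|_{L^\infty(B_\rho)})$, and by choosing $C_1$ small the $\rho^2\lambda$ factor is $\le 1/2$ and can be absorbed; iterating in $\rho$ this controls $\|u\|_{L^\infty(B_\rho)}$ and in fact all the $\nabla u$-type quantities on a slightly smaller ball by $\|\nabla u\|_{L^\infty(B_{C_1/\sqrt{\lambda}})}$ up to a fixed constant. (One must be slightly careful that the right-hand side of \eqref{der2} involves $\nabla u$ rather than $u$; since $u$ is defined only up to an additive constant on $B(1)$ while $D^\beta u$ for $|\beta|\ge 1$ does not see constants, one normalizes $u$ so that its average over the small ball vanishes, and then $\|u\|_{L^\infty}\le C\rho\|\nabla u\|_{L^\infty}$ by Poincaré.)

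Next I would obtain the factorially-growing derivative bound. The cleanest route is to rescale: set $v(y)=u(\rho y)$ with $\rho=C_1/\sqrt{\lambda}$, so that $v$ solves $-\Delta v=\rho^2\lambda\, v = C_1^2 v$ on $B(1)$, an elliptic equation with fixed (bounded) zero-order coefficient. For such equations the classical analytic-regularity estimates (Morrey–Nirenberg, or the Cauchy-estimate argument as in \cite{DF1}) give $|D^\beta v(0)|\le \beta!\,A^{|\beta|}\|v\|_{L^\infty(B(1))}$ with $A$ depending only on the dimension and on $C_1^2$, hence only on the dimension once $C_1$ is fixed. Undoing the scaling multiplies the left side by $\rho^{-|\beta|}=(\sqrt{\lambda}/C_1)^{|\beta|}$, producing exactly the factor $C^{|\beta|}\sqrt{\lambda}^{|\beta|}$ in \eqref{der2}, while the right side $\|v\|_{L^\infty(B(1))}=\|u\|_{L^\infty(B_\rho)}$ is bounded by $\|\nabla u\|_{L^\infty(B_\rho)}$ (up to the fixed constant from the previous paragraph, after the zero-average normalization).

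The main obstacle I anticipate is keeping the constant $A$ in the Cauchy estimate genuinely independent of $\lambda$: the analytic-regularity bound $|D^\beta v(0)|\le\beta!A^{|\beta|}\|v\|_\infty$ has $A$ controlled by the radius of convergence of the coefficients and by the size of the zero-order term, and this is why the rescaling to the unit ball — which turns the large eigenvalue $\lambda$ into the harmless bounded constant $C_1^2$ — is essential rather than cosmetic. A secondary technical point is that the induction on $\beta$ hidden in the Cauchy estimate must be carried out uniformly; one differentiates the equation $|\beta|-2$ times, applies interior Schauder/elliptic estimates on a chain of shrinking balls of radii $1-j/|\beta|$, and tracks the resulting $\beta!$ and geometric-in-$|\beta|$ factors — this is the standard argument and I would simply invoke it, citing \cite{DF1} and \cite{morrey} (or the analogous lemma already used there), rather than reproducing the bookkeeping.
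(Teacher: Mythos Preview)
Your approach is essentially the paper's: rescale by $\rho=C_1/\sqrt{\lambda}$, apply a standard analytic-hypoellipticity bound on the unit ball, and unscale. The paper normalizes by setting $\|\nabla u\|_{L^\infty(M)}=1$ and then invokes H\"ormander's hypoellipticity argument (\cite{hor1}, p.~178) directly; your Poincar\'e normalization to pass from $\|u\|_\infty$ to $\|\nabla u\|_\infty$ on the small ball is a clean alternative for the same bookkeeping.

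One point you gloss over: on a Riemannian manifold the Laplacian in a chart has variable coefficients, so after rescaling the equation is not literally $-\Delta v=C_1^2 v$ but rather $P_\lambda v=v$ for an operator $P_\lambda=\sum_{1\le|\alpha|\le 2} b_\alpha D^\alpha$ with $b_\alpha(y)=\rho^{2-|\alpha|}a_\alpha(\rho y)$. The paper's only substantive computation is checking that $\sup_{B_1}|D^\beta b_\alpha|\le C_2\,|\beta|!$ uniformly in $\lambda$ (using analyticity of the metric and the smallness of $C_1$), and this is precisely what makes the constant $A$ in the Cauchy/hypoellipticity estimate $\lambda$-independent. You correctly anticipate this obstacle in your last paragraph, but your formula ``$-\Delta v=C_1^2 v$'' hides exactly the place where it has to be verified.
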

\begin{proof}[proof of lemma \ref{der}]Like in \cite{DF1}, this result can be obtained by rescaling the equation  and using the hypoellipticity proof  (\cite{hor1}, p.178) for an elliptic operator whose coefficients have uniform bounded derivatives.\newline 
Indeed note first that we may assume $\|\nabla u\|_{L^\infty(M)}=1$. Now writing in our local chart $\Delta=\sum_{1\leq |\alpha|\leq2}a_\alpha D^\alpha$ and consider the function $u_{\lambda}(x)=u(\frac{C_1}{\sqrt{\lambda}}x)$, where $C_1$ will be fix below. One can see that $u_\lambda$ is a solution to the elliptic equation $$P_\lambda u_\lambda=u_\lambda$$ with $P_\lambda=\sum_{1\leq|\alpha|\leq2}b_\alpha D^\alpha$ and $$b_\alpha(x)=\frac{\lambda^{-1+\frac{|\alpha|}{2}}}{C_1^{|\alpha|}}a_\alpha\left(\frac{C_1x}{\sqrt{\lambda}}\right).$$
A short computation of $D^\beta b_\alpha$, gives for $C_1$ small enough and any multi-index $\beta$: 
$$\sup_{B_1}|D^\beta b_\alpha(x)|\leq C_2|\beta|!,\:\:\:\:\:   \forall 1\leq|\alpha|\leq2$$
where $C_2$ is a constant depending only on $M$.
Then one can use the hypoellipticity proof (\cite{hor1}) with simple modifications to get for any multi-index $\beta$ with $|\beta|>1$: $$|D^\beta u_\lambda(0)|\leq A^{|\beta|}\beta !.$$ 
\end{proof}

\begin{proof}[Proof of proposition \ref{cri}]

Expanding $V=(\frac{\partial u}{\partial x_1},\cdots,\frac{\partial u}{x_n})$ in its  Taylor series gives $$V(z)=\sum_{|\alpha|\geq 0}\frac{z^\alpha}{\alpha!}D^\alpha V(0),$$
where for $\alpha=(\alpha_1,\cdots,\alpha_n)$ in $\N^n$ and $z=(z_1,\cdots,z_n)$ in $\C^n$ we have set $z^\alpha:=z_1^{\alpha_1}z_2^{\alpha_2}\cdots z_n^{\alpha_n}$ and $\alpha!=\alpha_1!\alpha_2!\cdots\alpha_n!$. Now using \eqref{der2} and summing a geometric series gives for a constant $\rho$ small enough
\begin{equation}\label{comp}\sup_{B_\C(0,\frac{\rho}{\sqrt{\lambda}})}|V(z)| \leq C\sup_{B(0,\frac{C_1}{\sqrt{\lambda}})}|V(x)|. \end{equation} 
Then by translating, in the complex ball $B_\C(1)$, the equation and iterating the estimate \eqref{comp} a multiple of $\sqrt{\lambda}$ times one has 
$$\forall z\in B_{\C}(1),\ |V(z)|\leq C^{\sqrt{\lambda}}\sup_{B(2)}|V(x)|  $$
This implies
\begin{equation}\label{re}
\sup_{B_{\C}(1)}|F(z)|\leq e^{C\sqrt{\lambda}}\sup_{B(2)}|F(x)| 
\end{equation}

\non which gives proposition \ref{cri} by using doubling inequality \eqref{do}.
\end{proof}


\begin{proof}[proof of theorem \ref{principal}] 
Let $u$ be a solution to \eqref{fp}, let $r_0>0$ a fixed number not larger than the injectivity radius of $M$ and $p$ a arbitrary point in $M$. Let consider a normal chart around $p$. By proposition  \ref{cri} one has that 
$\displaystyle{F=\sum_{i=1..n}\left|\frac{\partial u}{\partial x_i}\right|^2}$  satisfy the hypothesis of theorem \ref{df}.
 Then since the nodal set of $F$ is the critical set of $u$ one has  \begin{equation}\label{nod}\Hc^{n-1}\left(\Cc_u\cap B(p,r_0)\right)\leq C\sqrt{\lambda}\end{equation}
where $C$ depends only on $r_0$ and $M$.\newline
The Theorem \ref{principal} follows by a covering argument since $M$ is compact. 
\end{proof}
\begin{Rq} Since doubling estimates imply vanishing order estimates it follows from lemma 3 of \cite{Bar} that the local  estimate \eqref{nod} 
 is still true on smooth manifold, but without any control on the radius $r_0$. 
\end{Rq}

\bibliographystyle{siam}

\bibliography{quantitativeuniqueness}
\end{document}